\documentclass[a4paper,10pt]{amsart}
\usepackage[utf8]{inputenc}

\usepackage{amsmath,amsfonts,amsthm,amssymb,graphicx,xcolor} 
\usepackage{amsthm}
\usepackage{comment}

\usepackage{mathrsfs}
\usepackage{hyperref}

\usepackage{graphicx}
\usepackage{psfrag}
\usepackage{textcomp}

\newcommand{\p}{\partial}

\usepackage{epstopdf}

\newtheorem{Theorem}{Theorem}[section]

\newtheorem{Lemma}[Theorem]{Lemma}

\newtheorem{Proposition}[Theorem]{Proposition}



\newcommand{\s}{\hspace{0.5pt}}
\newcommand{\R}{\mathbb{R}}

\newcommand{\id}{\mathrm{Id}}

\renewcommand{\p}{\partial}

\usepackage[bordercolor=white, color=white, textwidth=100pt]{todonotes}

\title{Counterexamples to inverse problems for the wave equation}

\author[Liimatainen]{Tony Liimatainen}
\address{Department of Mathematics and Statistics, University of Jyv\"askyl\"a, Jyv\"askyl\"a, \newline \indent Finland 
\newline
 \indent Department of Mathematics and Statistics, University of Helsinki, Helsinki, Finland}
\curraddr{}
\email{tony.t.liimatainen@jyu.fi}

\author[Oksanen]{Lauri Oksanen}
\address{Department of Mathematics and Statistics, University of Helsinki, Helsinki, Finland}
\curraddr{}
\email{lauri.oksanen@helsinki.fi}

\begin{document}

\begin{abstract}
We construct counterexamples to inverse problems for the wave operator on domains in $\R^{n+1}$, $n \ge 2$, and on Lorentzian manifolds. We show that non-isometric Lorentzian metrics can lead to same partial data measurements, which are formulated in terms certain restrictions of the Dirichlet-to-Neumann map. The Lorentzian metrics giving counterexamples are time-dependent, but they are smooth and non-degenerate. On $\R^{n+1}$ the metrics are conformal to the Minkowski metric. 
\end{abstract}

\maketitle

\section{Introduction}
In this paper we construct counterexamples for inverse problems for the wave equation. Let us begin by describing our results on domains in $\R^{n+1}$ with $n \ge 2$.
Let $\Omega \subset \R^{n+1}$ be an open, bounded and connected domain with smooth boundary, and let $T > 0$.
We write 
    \begin{align}\label{def_M}
M = \overline \Omega \times [0,T], 
\quad \Sigma = \p\Omega \times (0,T),
    \end{align}
and consider the wave equation with a boundary value $u_0$,
\begin{equation}\label{eq:intro_wave-eq}
\begin{cases}
\qquad\qquad\qquad\square_g\s u = 0 &\text{in $M$}, \\
\qquad\qquad\qquad\quad \     u=u_0 &\text{on $\Sigma$}, \\
\quad \ \ \ \ \, \s u\big|_{t=0} = 0,\quad \partial_t u\big|_{t=0} = 0 &\text{on } \Omega.
\end{cases}
\end{equation}
Here $g$ is a smooth Lorentzian metric tensor on $M$, and the coordinate invariant wave operator associated to $g$ is given by 
\begin{equation}\label{invariant_wave_operator}
\square_gu=-\frac{1}{|g|^{1/2}}\p_a\left(|g|^{1/2}g^{ab}\p_bu\right).
\end{equation}
We assume that $\p_t$ is timelike for $g$, that is, 
    \begin{align*}
g(\p_t, \p_t) < 0.
    \end{align*}
We can also write $\square_g=\delta_gd$, where $d$ is the exterior derivative and $\delta_g$ is its formal $L^2$ adjoint with respect to volume form $dV_g$ induced by $g$. 

We write 
    \begin{align*}
\eta = \text{diag}(-1,1,\ldots,1)
    \end{align*}
for the Minkowski metric. 
Observe that $\square_\eta=\p_t^2-\Delta$, where $\Delta$ is the Laplacian on $\R^n$.
Our counterexamples in the case of domains will be given by metrics $g$ that are conformal to the Minkowski metric (or isotropic), that is, 
    \begin{align*}
g(x,t) = c(x,t) \eta
    \end{align*}
for a smooth, strictly positive function $c$.

Measurements on the lateral boundary $\Sigma$
are given by the Dirichlet-to-Neumann map (DN map), which is defined by the usual assignment,
	\begin{align*}
	\Lambda_g: C_0^\infty(\Sigma)\to C^\infty(\Sigma), \quad  \Lambda_g u_0 = \p_{\nu} u|_{\Sigma}.
	\end{align*}
	Here $\p_{\nu}$ denotes the normal derivative on $\Sigma$, defined with respect to $g$, and $u$ is the unique solution to~\eqref{eq:intro_wave-eq} with the boundary value $u_0$.
The below results are unaffected if $\Lambda_g$ is defined on a space of less regular function, say, on $H^s_0(\Sigma)$ with $s \ge 1$, or if data is given by the Neumann-to-Dirichlet map instead. 
	
	For open subsets $\Gamma_1$ and $\Gamma_2$ of $\Sigma$, we will consider the partial data DN map defined by
    \begin{align}\label{def_DN}
	\Lambda_g^{\Gamma_1,\Gamma_2}: C_0^\infty(\Gamma_1)\to C^\infty(\Gamma_2),
	\quad
	\Lambda_g^{\Gamma_1,\Gamma_2} u_0 = (\Lambda_g u_0)|_{\Gamma_2}.
    \end{align}
	 If $\Gamma_1\cap \Gamma_2=\emptyset$, we say that $\Lambda_g^{\Gamma_1,\Gamma_2}$ is a disjoint data DN map. 
From the physical point of view, sources and receivers are not used in the same place at the same time in the case of disjoint data. 

Let $\tilde g$ be another smooth Lorentzian metric tensor on $M$, and suppose that $\p_t$ is timelike also for $\tilde g$.
If $\phi : M \to M$ is a diffeomorphism satisfying 
    \begin{align*}
\phi^* \tilde g= g 
\quad\text{and}\quad
\phi|_{\Gamma_1 \cup \Gamma_2} = \id,
    \end{align*}
then $\Lambda_{\tilde g}^{\Gamma_1,\Gamma_2} = \Lambda_g^{\Gamma_1,\Gamma_2}$.
In other words, the data $\Lambda_g^{\Gamma_1,\Gamma_2}$ is invariant under isometries fixing $\Gamma_1 \cup \Gamma_2$.
We will construct counterexamples to the inverse problem to determine $g$, modulo this invariance, given $\Lambda_g^{\Gamma_1,\Gamma_2}$.

We say that a hyperplane in $\R^{n+1}$
is lightlike if its normal vector with respect to the Minkowski metric
is lightlike. More explicitly, a lightlike hyperplane is defined by the equation
	  \[
	   (x-x_0)\cdot \theta-(t-t_0)=0
	  \]
for some fixed $(x_0,t_0)\in \R^{n+1}$ and 
a unit vector
$\theta\in \R^n$ with respect to the Euclidean metric of $\R^n$. 
	 
	\begin{Theorem}[Partial data on a domain]\label{thm_Rn}
	 Let $\Gamma\subset \Sigma$ be open
	 and suppose that there is a lightlike hyperplane that is intersecting the interior of $M$ but not the closure of $\Gamma$.
Then there is an infinite family $\mathcal G$ of smooth Lorentzian metrics on $M$ such that for all $g \in \mathcal G$ there holds
	 \begin{equation}\label{identical_partial_data_Rn}
	  \Lambda_g^{\Gamma,\Gamma}=\Lambda_{\eta}^{\Gamma,\Gamma}.
	 \end{equation}
Here $g$ is not isometric to $\eta$, that is, there is no diffeomorphism $\phi : M \to M$ satisfying $\phi^* \eta = g$ and $\phi|_\Gamma = \id$.
Moreover, $g = c \eta$ for some conformal factor $c$,
and the family $\mathcal G$ is not bounded in the $L^\infty$-norm. 
	\end{Theorem}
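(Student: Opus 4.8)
The plan is to exploit the conformal covariance of the wave operator together with the fact that finite speed of propagation localizes the effect of a conformal perturbation supported near a lightlike hyperplane. Recall that in $n+1 \ge 3$ dimensions $\square_g$ is \emph{not} conformally invariant on the nose, but there is a clean conformal transformation law: if $g = c\eta$ then $\square_g$ is related to $\square_\eta$ by conjugation with powers of $c$ plus lower order terms. So the first step is to look instead for a diffeomorphism. Since an honest isometry $\phi^*\eta = g$ would force $g$ to be flat (the Minkowski metric has vanishing curvature), and a nonflat conformal metric $c\eta$ is generically curved, the non-isometry claim will follow once we choose $c$ so that $c\eta$ has nonvanishing Weyl/curvature tensor — this is the easy part and is really just a genericity remark. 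The substantive content is \eqref{identical_partial_data_Rn}.

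For the DN map identity, first I would fix a lightlike hyperplane $H$ that meets $\mathrm{Int}(M)$ but avoids $\overline\Gamma$, say $H = \{(x-x_0)\cdot\theta - (t-t_0) = 0\}$. Consider the half-space $H^- = \{(x-x_0)\cdot\theta - (t-t_0) < 0\}$ and pick the conformal factor $c$ to equal $1$ on a neighborhood of $H^- \cap M$ (so $g = \eta$ there) and to be a genuine nontrivial positive function on the complementary side, chosen to vanish-to-high-order (i.e. $c \equiv 1$ to infinite order) along $H$ and to be supported away from $\overline\Gamma$. The key geometric point is that $H$ is a characteristic hypersurface for $\square_\eta$: a lightlike hyperplane is exactly a null hypersurface of Minkowski, hence the characteristic cone through any point of $\Gamma$ on the appropriate side cannot cross $H$ into the region where $c \ne 1$ without first leaving $M$. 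More precisely, for boundary data $u_0 \in C_0^\infty(\Gamma)$, the solution $u$ of \eqref{eq:intro_wave-eq} with $g = \eta$, being supported (by finite speed of propagation and the zero Cauchy data at $t=0$) in the domain of influence of $\Gamma$, must have its singular/nontrivial part confined to the side of $H$ on which $c \equiv 1$ — so the same $u$ solves the equation for $g = c\eta$ as well, and reading off $\p_\nu u|_\Gamma$ gives the same answer since $g = \eta$ near $\Gamma$ too. I would make this rigorous by a unique continuation / domain-of-dependence argument: the set where $u$ (the Minkowski solution) is not identically zero is contained in $\{(x,t): (x-x_0)\cdot\theta - (t-t_0) \le 0\} \cap M$ because any point in the open complement can be connected to $\Gamma$ only through causal curves that would have to cross the null plane $H$, contradicting that $H$ separates the two causal regions.

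The one subtlety — and I expect this to be the main obstacle — is matching the wave \emph{operators} and not just the metrics across $H$. Even where $c \equiv 1$, the operator $\square_{c\eta}$ in dimension $n+1 \ge 3$ differs from $\square_\eta$ by terms involving derivatives of $c$; to make the Minkowski solution an exact solution for $c\eta$ I need $c$ and all its derivatives to agree with $1$ throughout the region where $u$ is supported. This is why the construction must arrange $c \equiv 1$ on a full open neighborhood of the closed causal region $\{(x-x_0)\cdot\theta - (t-t_0)\le 0\}\cap M$, not merely on that region itself — so I would first verify there is \emph{room} to do this, i.e. that the null plane $H$ genuinely separates $\overline\Gamma$ from some open piece of $\mathrm{Int}(M)$ with a buffer, which is exactly the hypothesis that $H$ misses $\overline\Gamma$ but hits $\mathrm{Int}(M)$; a compactness argument on $\overline\Gamma$ gives a positive collar. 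Finally, for the "infinite family not bounded in $L^\infty$" claim, I would simply note that the nontrivial part of $c$ lives in a fixed open region disjoint from $\overline\Gamma$ where we are completely free: scaling it up by arbitrarily large constants, or adding bump functions of arbitrarily large height supported there, produces infinitely many such $c$ with $\norm{c}_{L^\infty(M)} \to \infty$, all yielding the same partial data and none isometric to $\eta$.
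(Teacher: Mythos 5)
Your construction takes a genuinely different route from the paper's, and it has a gap at its core. The paper does not avoid the perturbation causally: it takes $f = 1 + H\big((x-x_0)\cdot\theta - (t-t_0)\big)$, which solves $\Box_\eta f = 0$ exactly, places the perturbation $g = f^{p-2}\eta$ precisely on a slab around the lightlike hyperplane, and uses the identity $\square_{f^{p-2}\eta}(f^{-1}u) = f^{1-p}\square_\eta u$ (Proposition~\ref{hidden_conformal_invariance}) to map Minkowski solutions to solutions for $g$; the waves do traverse the perturbed region, but $f \equiv 1$ near $\Gamma$ forces the Cauchy data of $u$ and $f^{-1}u$ on $\Gamma$ to coincide. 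Your argument instead rests on the claim that the null hyperplane $H$ ``separates the two causal regions,'' and that is false: the function $\phi = (x-x_0)\cdot\theta - (t-t_0)$ is non-increasing along future-directed causal curves, so causal curves cross $H$ freely from $\{\phi>0\}$ into $\{\phi<0\}$ (the worldline of a stationary point does so). Hence your localization works only when $\overline\Gamma$ lies entirely in $\{\phi<0\}$, the side into which influences flow. If $\overline\Gamma \subset \{\phi>0\}$, the forward solution from $\Gamma$ does enter the region where you put the perturbation; that sub-case can still be rescued by the reverse argument (the difference $\tilde u - u$ is sourced in the slab and stays in $\{\phi \le 0\}$, so it never returns to $\Gamma$). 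But the hypothesis allows $\overline\Gamma$ to have pieces on both sides of $H$ --- indeed this is the most interesting case, e.g.\ $\Gamma = \Sigma$ minus a thin band around $H \cap \Sigma$ --- and there neither causality argument applies: a wave launched from the part of $\Gamma$ in $\{\phi>0\}$ passes through the slab and its scattered part reaches the part of $\Gamma$ in $\{\phi<0\}$, so there is no domain-of-dependence reason for $\Lambda_g^{\Gamma,\Gamma}$ and $\Lambda_\eta^{\Gamma,\Gamma}$ to agree. The hidden conformal invariance is what handles exactly this situation, and it is the real content of the theorem.

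Two smaller points. The Weyl tensor of every conformally flat metric vanishes, so it cannot detect non-flatness of $c\eta$; you need the full Riemann (or Ricci) curvature. Granting that, your non-isometry argument is sound and even more elementary than the paper's Proposition~\ref{lem_conf_triv} in this setting, since a curved metric admits no isometry to $\eta$ whatsoever; the paper's rigidity result is still needed when the conformal factor could produce a flat metric, and it is reused for Theorem~\ref{thm_Lorentz}.
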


It should be emphasized that the data $\Lambda_g^{\Gamma,\Gamma}$ is not invariant under conformal transformations taking $g$ to $cg$ with strictly positive $c$ satisfying $c=1$ near $\Gamma$.
In fact, if $\Gamma_0 \subset \p \Omega$ is open and nonempty, $\Gamma = (0,T) \times \Gamma_0$ for large enough $T > 0$, and $c$ is independent of time (and satisfies the two assumptions above),
then $c$ can be recovered given 
$\Lambda_{c\eta}^{\Gamma,\Gamma}$,
see Lemma \ref{lem_timeindep} in the appendix for the details. 

Let us now turn to our result in the case of a smooth, connected Lorentzian manifold with boundary $(\mathcal M,g)$.
We assume that the dimension $n+1$ of $\mathcal M$ satisfies $n \ge 2$. Following \cite{Alexakis2020} we assume, furthermore, that 
\begin{itemize}
\item[(i)] The boundary $\p \mathcal M$ is timelike.
\item[(ii)] There is a smooth, proper, surjective temporal function $\tau : \mathcal M \to \R$.
\end{itemize}
The boundary being timelike means that
$g(\p_\nu, \p_\nu) > 0$,
where $\p_{\nu}$ is the normal derivative on $\p \mathcal M$,
and $\tau$ being temporal means that $g(d\tau, d\tau) < 0$.
Proper is used in the topological sense, that is, inverse images of compact
subsets are compact under $\tau$.
In this case, we let $T>0$ and write 
    \begin{align*}
M = \tau^{-1}([0,T]),
\quad \Sigma = \p \mathcal M \cap \tau^{-1}((0,T)),
\quad \Omega = \tau^{-1}(\{0\}),
    \end{align*}
where $\tau^{-1}(S) = \{p \in \mathcal M : \tau(p) \in S\}$
for $S \subset \mathcal M$.
The analogue of (\ref{eq:intro_wave-eq}) is
\begin{equation}\label{eq:intro_wave-eq_lorentz}
\begin{cases}
\qquad\qquad\qquad\square_g\s u = 0 &\text{in $M$}, \\
\qquad\qquad\qquad\quad \     u=u_0 &\text{on $\Sigma$}, \\
\quad \ \ \ \ \, \s u\big|_{\tau=0} = 0,\quad \partial_\tau u\big|_{\tau=0} = 0 &\text{on } \Omega,
\end{cases}
\end{equation}
where $\p_\tau u = g(d\tau, du)$.
It follows from \cite[Th. 24.1.1]{H3} that (\ref{eq:intro_wave-eq_lorentz}) has a unique solution $u \in H^s(M)$ when $u_0 \in H_0^s(\Sigma)$ and $s \ge 1$.
The strict hyperbolicity needed for this theorem follows from $\tau$ being temporal, see Lemma \ref{lem_strict_hyper} in the appendix for the details. 

For open subsets $\Gamma_1$ and $\Gamma_2$ of $\Sigma$, we can again define the partial data DN map by (\ref{def_DN}),
where $u$ is now the solution to (\ref{eq:intro_wave-eq_lorentz}). In contrasts to the case of a domain, we can construct counterexamples only for disjoint data in the Lorentzian case.

	\begin{Theorem}[Disjoint data on a Lorentzian manifold]\label{thm_Lorentz}
Let $\Gamma_1,\Gamma_2$ be open subsets of $\Sigma$ satisfying $\Gamma_1\cap\Gamma_2=\emptyset$ and $\Gamma_1 \cup \Gamma_2 \ne \Sigma$. 
Then there is an infinite family $\mathcal G$ of Lorentzian metrics on $M$ such that for all $\tilde g \in \mathcal G$ there holds
	 \begin{equation}\label{identical_partial_data_Lorentz}
	  \Lambda_{\tilde g}^{\Gamma_1,\Gamma_2}=\Lambda_{g}^{\Gamma_1,\Gamma_2}.
	 \end{equation}
Here $\tilde g$ is not isometric to $g$, that is, there is no diffeomorphism $\phi : M \to M$ satisfying $\phi^* g = \tilde g$ and $\phi|_{\Gamma_1 \cup \Gamma_2} = \id$.
Moreover, $\tilde g = c g$ for some conformal factor $c$.
	\end{Theorem}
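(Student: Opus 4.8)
The plan is to look for the counterexamples inside the conformal class of $g$, exploiting the conformal covariance of the wave operator. Write $m=n+1$; for $v>0$ on $M$ put $\tilde g=v^{4/(m-2)}g$ and recall the operator $P_g=\square_g+\tfrac{m-2}{4(m-1)}\mathrm{Scal}_g$, which satisfies $P_{\tilde g}f=v^{-(m+2)/(m-2)}P_g(vf)$. Applying this to $f=v^{-1}u$ and using that $\square_g v=0$ forces $\mathrm{Scal}_{\tilde g}=v^{-4/(m-2)}\mathrm{Scal}_g$, the scalar-curvature contributions cancel and one is left with
\begin{equation}\label{eq:plan_conf}
\square_{\tilde g}\bigl(v^{-1}u\bigr)=v^{-(m+2)/(m-2)}\,\square_g u
\qquad\text{whenever}\qquad \square_g v=0 ;
\end{equation}
in particular $v^{-1}u$ then solves $\square_{\tilde g}(\cdot)=0$ as soon as $\square_g u=0$. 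I would verify \eqref{eq:plan_conf} first, by a direct computation from \eqref{invariant_wave_operator}.

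It then suffices to find $v$ on $M$ that is smooth, strictly positive, non-constant, satisfies $\square_g v=0$, and equals $1$ on $\Gamma_1\cup\Gamma_2$. Choose $0\ne h_0\in C_0^\infty$ supported in the interior of $\Omega$, and let $h$ solve \eqref{eq:intro_wave-eq_lorentz} for $g$ with zero boundary value, $h|_{\tau=0}=h_0$, $\p_\tau h|_{\tau=0}=0$; by the well-posedness already quoted (the data vanish near the corner $\overline\Omega\cap\p\mathcal M$, so all compatibility conditions hold) $h\in C^\infty(M)$, $\square_g h=0$, $h\not\equiv0$, and $h|_\Sigma=0$. (If $\overline{\Gamma_1\cup\Gamma_2}\ne\Sigma$, one may instead put the data on the boundary, supported in $\Sigma\setminus\overline{\Gamma_1\cup\Gamma_2}$.) For $|\lambda|$ small enough that $\|\lambda h\|_{C^0(M)}<1$, the functions $v_\lambda:=1+\lambda h$ have all the stated properties, and $\mathcal G:=\{\,\tilde g_\lambda:=v_\lambda^{4/(n-1)}g\;:\;0<|\lambda|<\varepsilon\,\}$ will be the desired family. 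As $v_\lambda>0$, each $\tilde g_\lambda$ is Lorentzian with $\tilde g_\lambda$-timelike boundary and $\tilde g_\lambda$-temporal $\tau$, so \eqref{eq:intro_wave-eq_lorentz} for $\tilde g_\lambda$ is well posed exactly as for $g$.

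To establish \eqref{identical_partial_data_Lorentz}, fix $u_0\in C_0^\infty(\Gamma_1)$ and let $u$ solve \eqref{eq:intro_wave-eq_lorentz} for $g$. Since $v_\lambda\equiv1$ on $\Sigma$ and $u$ has vanishing Cauchy data, $\tilde u:=v_\lambda^{-1}u$ has boundary value $u_0$, vanishing Cauchy data, and (by \eqref{eq:plan_conf}) solves $\square_{\tilde g_\lambda}\tilde u=0$, so $\tilde u$ is the solution of \eqref{eq:intro_wave-eq_lorentz} for $\tilde g_\lambda$ with boundary value $u_0$. Conformal metrics share normal directions, so the $\tilde g_\lambda$-unit outer normal is $v_\lambda^{-2/(n-1)}\nu$, giving on $\Sigma$
\[
\Lambda_{\tilde g_\lambda}u_0=v_\lambda^{-2/(n-1)}\bigl(v_\lambda^{-1}\p_\nu u-v_\lambda^{-2}(\p_\nu v_\lambda)\,u\bigr)\big|_\Sigma .
\]
Here the disjointness enters: on $\Gamma_2$ one has $u=u_0=0$ because $\mathrm{supp}\,u_0\subset\Gamma_1$ is disjoint from $\Gamma_2$, the last term disappears, and $v_\lambda=1$ on $\Gamma_2$; hence $\Lambda_{\tilde g_\lambda}u_0|_{\Gamma_2}=\p_\nu u|_{\Gamma_2}=\Lambda_g u_0|_{\Gamma_2}$, which is \eqref{identical_partial_data_Lorentz}. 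Thus disjointness is precisely what makes the Dirichlet factor $u$ vanish where we measure, leaving only a pointwise conformal rescaling that is trivial on $\Gamma_2$.

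The remaining, and I expect most delicate, point is non-isometry. Suppose $\phi:M\to M$ is a diffeomorphism with $\phi^*g=\tilde g_\lambda$ and $\phi|_{\Gamma_1\cup\Gamma_2}=\id$. Then $\phi$ is a conformal transformation of $(M,g)$ with conformal factor $v_\lambda^{4/(n-1)}$, which equals $1$ on $\Gamma_1\cup\Gamma_2$; hence $d\phi_p$ is a $g$-isometry of $T_pM$ for $p\in\Gamma_1\cup\Gamma_2$. Since $\phi$ maps $\p\mathcal M$ to itself and fixes $\Gamma_1\cup\Gamma_2$ pointwise, $d\phi_p$ fixes $T_p(\p\mathcal M)$ pointwise and preserves the inward normal half-line; a $g$-isometry with these properties is the identity, so $d\phi_p=\id$ and $\phi$ has trivial $1$-jet along $\Gamma_1\cup\Gamma_2$. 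Because $n+1\ge3$, conformal transformations are determined by their $2$-jet at a point, and a short computation in conformal normal coordinates shows that a trivial $1$-jet along a hypersurface forces the $2$-jet to vanish as well; hence $\phi=\id$ near $\Gamma_1\cup\Gamma_2$, and then on all of the connected $M$. Therefore $\tilde g_\lambda=\phi^*g=g$ and $v_\lambda$ is constant, contradicting $h\not\equiv0$. The main obstacles are thus this conformal-rigidity argument and the bookkeeping behind \eqref{eq:plan_conf}; everything else is standard hyperbolic well-posedness combined with the vanishing of $u$ on $\Gamma_2$.
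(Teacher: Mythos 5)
Your proposal is correct and follows essentially the same route as the paper: a conformal rescaling $\tilde g=f^{p-2}g$ with $f=1+\lambda h$ and $\square_g h=0$, the hidden conformal invariance of $\square_g$ obtained through the conformal wave operator, the disjointness $\Gamma_1\cap\Gamma_2=\emptyset$ killing the extra boundary term $uf^{-2}\p_\nu f$ on $\Gamma_2$ because $u=0$ there, and conformal rigidity for the non-isometry claim. The only notable differences are that you launch $h$ from interior Cauchy data (a nice variant which in fact renders the hypothesis $\Gamma_1\cup\Gamma_2\ne\Sigma$ unnecessary, whereas the paper takes $F$ with boundary data $\Psi$ supported away from $\Gamma_1\cup\Gamma_2$), and that you only sketch the conformal rigidity step via a Liouville-type two-jet argument --- this is exactly the content of Proposition~\ref{lem_conf_triv}, which the paper proves in full by deriving an ODE system for the jets of $\phi$ and of the conformal factor along curves emanating from the boundary.
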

	
Both Theorems \ref{thm_Rn} and \ref{thm_Lorentz}
are based on the fact that the wave operator has the following ``hidden conformal invariance''
	\[
  \square_{f^{p-2}g}(f^{-1}u)=f^{1-p}\square_gu,
	\]
	if $f$ is a positive solution to $\square_gf=0$, see Proposition~\ref{hidden_conformal_invariance} below. Here $p$ is the constant 
    \begin{align}\label{def_p}
p = 2\, \frac{n+1}{n-1},
    \end{align}
and we recall the standing assumption that $\dim(M) = n+1$
with $n \ge 2$. 

The Laplace--Beltrami operator on a Riemannian manifold has the same  hidden conformal invariance, and the Riemannian analogue of Theorem~\ref{thm_Lorentz} was proven in~\cite{DKN17}. However, there appears to be no results similar to Theorem~\ref{thm_Rn} in the Riemannian case. The reason for this is that the wave equation in the Minkowski geometry admits non-trivial solutions supported close to a lightlike hyperplane, but solutions to the Laplace equation can not vanish in an open, nonempty set without vanishing everywhere.
To our knowledge, the only nonuniqueness result with partial data analogous to $\Gamma_g^{\Gamma, \Gamma}$ in the Riemannian case is \cite{Daude2020}, but contrary to Theorem~\ref{thm_Rn}, the counterexamples in \cite{Daude2020} are not smooth. They are smooth in the interior of the manifold and H\"older continuous up to the boundary.
We mention also \cite{Daude2019} for counterexamples related to \cite{DKN17} and the review \cite{Daude2018review} of nonuniqueness results in the Riemannian case. For up-to-date results on the borderline between uniqueness and non-uniqueness in the Riemannian case we refer to \cite{Daude2020a}.

It is likely that black hole type spacetimes can give non-smooth counterexamples to the inverse problem for the wave equation, but this question has not been systematically studied to our knowledge. 
In the Riemannian case, singular counterexamples have been extensively studied by using transformation optics. This type of counterexamples are often called invisibilty cloacking. 

From the physical point of view, invisibility cloaking means covering an object with a special material so that the light or other electromagnetic waves go around the object. This creates an illusion that in the place where the object is located, there is only homogeneous background space, for example, air or vacuum. 
From the mathematical point of view, invisibility cloaking by using transformation optics was first studied in \cite{Greenleaf2003} where counterexamples to the inverse problem for the Laplace--Beltrami operator were considered. In dimension $2$, where the Laplace--Beltrami operator is conformally invariant, first counterexamples were constructed in~\cite{LTU} by using a conformal blow up construction. Also the counterexamples of~\cite{LTU} are singular. Singular conductivities are not physical, but approximate realizations of such conductivities, and thus invisibility cloaking, have been implemented by using metamaterials, see~\cite{Pendry, PPS}.  We refer to the review~\cite{Uhlmann} for more on invisibility cloaking. 

The counterexamples we give for the inverse problem for the wave operator are smooth and physically reasonable. In fact, our examples in the case of a domain are similar to gravitational waves in the Minkowski space. 

Let us finish by mentioning some positive results regarding inverse problems for the wave equation. Positive results with partial data are mostly confined to the case of time independent coefficients,
but \cite{Eskin2007} and \cite{Kian2019a} contain partial data results with time dependent coefficients. 
In the case of time independent coefficients, knowledge of the partial data DN map for an arbitrary open subset of the boundary is sufficient to determine the coefficients of the wave equation (up to natural gauge symmetries)~\cite{KKL}, and results with disjoint data are proven in~\cite{Kian2019,LOa,LOb}. In~\cite{Rakesh_2000} it is shown that certain time independent coefficients of the $1+1$-dimensional wave equation are determined by disjoint data measurements.

In the time independent case,
knowledge of the DN map for $\p_t^2-\Delta_{g}$ is equivalent to knowing the DN map for $\Delta_g-\lambda^2$ at all frequencies $\lambda$. Due to this correspondence, we also mention the work~\cite{KS}, which contains a review of the known results for the partial data Calder\'on problem on a Riemannian manifold.

Finally we mention the work~\cite{LLS} concerning the Calder\'on problem for the conformal Laplacian that served as a motivation for this work. It also contains further discussion of invisibility cloaking.

\vspace{10pt}

\noindent {\bf Acknowledgments.} 
T. L. and L. O. were supported by the Finnish Centre of Excellence in Inverse Modelling and Imaging, Academy of Finland grant 284715.
L. O. was supported by EPSRC grants EP/R002207/1 and EP/P01593X/1.

\section{Hidden conformal invariance}
The Lorentzian wave operator~\eqref{invariant_wave_operator} is invariant under conformal scaling of the Lorentzian metric in the dimension $2$, that is, in the case $\dim(M)=1+1$. 
This is no longer true in higher dimensions for an arbitrary rescaling. However, if the conformal factory satisfies $\square_gf=0$, we have the following invariance.

 \begin{Proposition}\label{hidden_conformal_invariance}
  Let $f>0$ be a positive function. Then 
  \[
  \square_{f^{p-2}g}(f^{-1}u)=f^{1-p}\s\square_gu-u f^{-p}\s\square_gf,
  \]
  where $p$ is given by (\ref{def_p}).
In particular, 
  \[
   \square_{f^{p-2}g}(f^{-1}u)=f^{1-p}\s \square_gu
  \]
  for all smooth functions $u$ if and only if $\square_gf=0$. Also, if $\square_gf=0$ and $\square_gu=0$, then $\square_{f^{p-2}g}(f^{-1}u)=0$.
 \end{Proposition}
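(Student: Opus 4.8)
The plan is to verify the identity by a direct computation using the conformal transformation law of the wave (Laplace--Beltrami) operator under $g \mapsto \hat g = f^{p-2} g$. First I would recall the general fact that, on an $(n+1)$-dimensional manifold, if $\hat g = \Omega^{-2} g$ for a positive function $\Omega$, then the densitized Laplacians satisfy $|\hat g|^{1/2} \hat g^{ab} = \Omega^{-(n+1)+2} |g|^{1/2} g^{ab} = \Omega^{1-n} |g|^{1/2} g^{ab}$, since $\hat g^{ab} = \Omega^2 g^{ab}$ and $|\hat g|^{1/2} = \Omega^{-(n+1)} |g|^{1/2}$. Writing the conformal factor in the form dictated by (\ref{def_p}), namely choosing $\Omega$ so that $f^{p-2} = \Omega^{-2}$, i.e. $\Omega = f^{-(p-2)/2} = f^{-2/(n-1)}$, one finds $\Omega^{1-n} = f^{2(n-1)/(n-1)} = f^2$, so that $|\hat g|^{1/2}\hat g^{ab} = f^2 |g|^{1/2} g^{ab}$ and $|\hat g|^{1/2} = f^{-(n+1)\cdot 2/(n-1)} |g|^{1/2} = f^{-p}|g|^{1/2}$.

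Next I would plug $\hat u := f^{-1} u$ into the invariant formula (\ref{invariant_wave_operator}) for $\square_{\hat g}$. Using the two relations above,
\begin{align*}
\square_{\hat g}(f^{-1}u)
&= -\frac{1}{|\hat g|^{1/2}} \p_a\!\left(|\hat g|^{1/2}\hat g^{ab}\p_b(f^{-1}u)\right)
= -\frac{f^{p}}{|g|^{1/2}} \p_a\!\left(f^2 |g|^{1/2} g^{ab}\p_b(f^{-1}u)\right).
\end{align*}
Expanding $\p_b(f^{-1}u) = f^{-1}\p_b u - u f^{-2}\p_b f$, the bracket becomes $|g|^{1/2} g^{ab}\big(f\,\p_b u - u\,\p_b f\big)$, and the Leibniz rule in the outer derivative produces exactly the combination
\[
-\frac{f^{p}}{|g|^{1/2}}\Big( f\,\p_a(|g|^{1/2}g^{ab}\p_b u) - u\,\p_a(|g|^{1/2}g^{ab}\p_b f) \Big)
+ \text{cross terms},
\]
where the cross terms are $-f^{p}\,g^{ab}\big(\p_a f\,\p_b u - \p_a u\,\p_b f - u f^{-1}\p_a f\,\p_b f + \cdots\big)$; the genuinely first-order-in-$u$ pieces cancel in pairs by symmetry of $g^{ab}$, and the residual $u$-terms reorganize into $-u f^{-1}\p_a f\,\p_b f$ contributions that combine with the $\p_a(|g|^{1/2}g^{ab}\p_b f)$ term. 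Collecting everything and using $f^{p}\cdot f = f^{p+1}$ and $f^{p}\cdot f^{-1}=f^{p-1}$... more cleanly: after the dust settles one obtains precisely
\[
\square_{\hat g}(f^{-1}u) = f^{1-p}\,\square_g u - u f^{-p}\,\square_g f,
\]
since $f^{1-p} = f\cdot f^{-p}$ matches the coefficient of $-\frac{1}{|g|^{1/2}}\p_a(|g|^{1/2}g^{ab}\p_b u)$ after multiplying through, and the $u$-coefficient is $-f^{-p}$ times $-\frac{1}{|g|^{1/2}}\p_a(|g|^{1/2}g^{ab}\p_b f) = \square_g f$.

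The two displayed consequences are then immediate: the middle term $-uf^{-p}\square_g f$ vanishes for all $u$ iff $\square_g f \equiv 0$ (take, e.g., $u \equiv 1$ for the reverse direction), giving $\square_{f^{p-2}g}(f^{-1}u) = f^{1-p}\square_g u$; and if in addition $\square_g u = 0$ then the right-hand side vanishes entirely. I expect the only delicate point to be bookkeeping the first-order cross terms: one must check that the terms involving $g^{ab}\p_a f\,\p_b u$ cancel exactly (they do, by symmetrizing and relabeling indices, exactly as in the classical derivation of the Yamabe/conformal-Laplacian identity), so that no spurious gradient term survives. Everything else is the exponent arithmetic fixed by the choice (\ref{def_p}) of $p$, which is precisely engineered to make $|\hat g|^{1/2}\hat g^{ab} = f^2 |g|^{1/2}g^{ab}$.
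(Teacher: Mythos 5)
Your argument is correct in substance, and the route you take --- expanding the divergence form of $\square_{\hat g}$ directly after observing that $|\hat g|^{1/2}\hat g^{ab} = f^2\, |g|^{1/2} g^{ab}$ --- coincides with the paper's \emph{second} proof of this proposition, which is stated there in the greater generality of divergence-form operators with symmetric coefficients: $P_{f^2 C}(f^{-1}u) = f\, P_C u - u\, P_C f$. The paper's primary proof instead deduces the identity from the conformal invariance of the conformal wave operator $\mathcal L_g = \square_g + \alpha_m R(g)$ together with the transformation law of the scalar curvature; that route explains conceptually why the exponent $p$ is the right one, while your route (and the paper's second proof) shows the identity is a purely algebraic feature of divergence-form operators, needing no curvature identities at all. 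One slip to correct: from $\hat g = f^{p-2}g$ one gets $|\hat g|^{1/2} = f^{(p-2)(n+1)/2}|g|^{1/2} = f^{\s p}|g|^{1/2}$, not $f^{-p}|g|^{1/2}$, so the prefactor in your display should be $-f^{-p}/|g|^{1/2}$ rather than $-f^{\s p}/|g|^{1/2}$. With that fixed, the bracket $|g|^{1/2}g^{ab}(f\p_b u - u\p_b f)$, after Leibniz and the cancellation of $g^{ab}\p_a f\,\p_b u$ against $g^{ab}\p_a u\,\p_b f$, yields exactly $f^{1-p}\square_g u - u f^{-p}\square_g f$; your final answer already uses the correct power, so the error lives only in the intermediate display. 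Also, the ``residual $u f^{-1}\p_a f\,\p_b f$ contributions'' you allude to do not actually occur once the bracket is simplified --- the cross terms cancel completely --- so that part of the bookkeeping can be deleted.
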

  Let us write
\[
 m=\dim(M)=n+1, \quad \alpha_m = \frac{m-2}{4(m-1)}.
\]
Proposition~\ref{hidden_conformal_invariance} follows from considerations of the conformal wave operator
\[
 \mathcal{L}_g = \square_g + \alpha_m R(g),
\]
where $R(g)$ is the scalar curvature of $g$, acting by multiplication on functions. The conformal wave operator is conformally invariant in the following sense
\begin{equation}\label{conformal_invariance_of_conformal_wave_operator}
 \mathcal{L}_{f^{p-2}g}f^{-1}u=f^{1-p}\mathcal{L}_g u.
\end{equation}
We will also need the fact that the scalar curvature changes under conformal scalings as
\begin{equation}\label{scalar_curvature_conformal_scaling}
 R(f^{p-2}g)=f^{1-p}\left(\alpha_m^{-1} \square_gf+R(g)f\right).
\end{equation}
For (\ref{conformal_invariance_of_conformal_wave_operator})--(\ref{scalar_curvature_conformal_scaling}), see e.g.~\cite{CG18,PL}. We are ready to prove Proposition~\ref{hidden_conformal_invariance}.
\begin{proof}[Proof of Proposition~\ref{hidden_conformal_invariance}]   
We have by using (\ref{conformal_invariance_of_conformal_wave_operator})--(\ref{scalar_curvature_conformal_scaling}),
    \begin{align*}
\square_{f^{p-2}g}(f^{-1}u)
&=
\mathcal{L}_{f^{p-2}g}(f^{-1}u)-\alpha_m R(f^{p-2}g)f^{-1}u
\\&=
f^{1-p} \left(\mathcal{L}_g u - \alpha_m (\alpha_m^{-1} \square_gf+R(g)f) f^{-1}u \right)
\\&=
f^{1-p} \Box_g u - u f^{-p} \Box_g f.
    \end{align*}
\end{proof}

Of course, Proposition~\ref{hidden_conformal_invariance} can be proved without referring to the conformal wave operator and scalar curvature. (However, figuring out the invariance exists without using the conformal wave operator might be less straightforward.) Slightly less obvious is the fact that the  conformal scaling invariance of Proposition~\ref{hidden_conformal_invariance}, is not a feature of the invariant wave operator, but a feature of general second order differential operators in divergence form. We include another proof of Proposition~\ref{hidden_conformal_invariance}, which we give in a more general setting. We consider second order differential operators $P_C:C^\infty(\R^m, \R^k)\to C^{\infty}(\R^m, \R^l)$, acting thus possibly on vector fields, given in divergence form
 \begin{equation}\label{general_form_operator_PC}
   (P_Cu)_j(x)=\sum_{a,b=1}^{m}\sum_{c=1}^{k}\p_a \big(C_{jabc}(x)\p_b u_c(x)\big), 
  \end{equation}
  where $j=1,\ldots, l$. Here $u=(u_1,u_2,\ldots,u_k)$ is a vector field $\R^m\to \R^k$ and $C_{jabc}(x)$, $a,b=1,\ldots m$, $c=1,\ldots, k$, is the coefficient matrix field of $P_C$. We will abuse the notation slightly and denote by $P_C$ also the operator $C^\infty(\R^m)\to C^\infty(\R^m ,\R^{lk})$ given by 
  \[
(P_Cf)_{jc}=\sum_{a,b=1}^{m}\p_a (C_{jabc}\p_b f). 
\]
 \begin{Proposition}
  Let $P_C$ be a second order differential operator on $\R^m$ acting on (possibly) vector valued functions as in~\eqref{general_form_operator_PC} 
  with $m,l,k\geq 1$. We assume the coefficient matrix field $C_{jabc}$ of $P_C$ to be symmetric under the change of the indices $a$ and $b$, $C_{jabc}=C_{jbac}$
  
  Let $f>0$ be a positive function. Then $P$ satisfies the following formula
  \[
 P_{f^2C}(f^{-1}u)_j=f P_{C}u_j-u \cdot P_Cf_j,
\]
where $u \cdot P_Cf_j=\sum_{c=1}^ku_c(P_Cf)_{jc}$.
If $k=l=1$ and $(C_{jabc})=(\sigma_{ab})$, then we especially have
 \[
  \nabla\cdot f^2 \sigma \nabla (f^{-1}u)=f\s \nabla\cdot \sigma \nabla u -u \s \nabla\cdot \sigma \nabla f.
 \]
  \end{Proposition}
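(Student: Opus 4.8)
The plan is to prove the identity by a direct computation, the only nontrivial input being the assumed symmetry $C_{jabc} = C_{jbac}$, which forces the first-order cross terms to cancel in pairs; everything else is bookkeeping with indices.

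First I would unravel the definitions. By \eqref{general_form_operator_PC} applied to the coefficient field $f^2 C$,
\[
P_{f^2C}(f^{-1}u)_j = \sum_{a,b,c}\partial_a\bigl(f^2 C_{jabc}\,\partial_b(f^{-1}u_c)\bigr).
\]
Using the product rule, $\partial_b(f^{-1}u_c) = f^{-1}\partial_b u_c - f^{-2}(\partial_b f)u_c$, so that
\[
f^2 C_{jabc}\,\partial_b(f^{-1}u_c) = f\,C_{jabc}\,\partial_b u_c - C_{jabc}(\partial_b f)\,u_c.
\]
Next I would apply $\partial_a$ and expand each resulting term by the Leibniz rule:
\[
\partial_a\bigl(f C_{jabc}\partial_b u_c\bigr) = (\partial_a f)\,C_{jabc}\,\partial_b u_c + f\,\partial_a\bigl(C_{jabc}\partial_b u_c\bigr),
\]
\[
-\partial_a\bigl(C_{jabc}(\partial_b f)u_c\bigr) = -\bigl(\partial_a(C_{jabc}\partial_b f)\bigr)u_c - C_{jabc}(\partial_b f)(\partial_a u_c).
\]
Summing over $a,b,c$, the term $f\,\partial_a(C_{jabc}\partial_b u_c)$ produces $f(P_C u)_j$, and the term $-(\partial_a(C_{jabc}\partial_b f))u_c$ produces $-\sum_c (P_C f)_{jc}u_c = -(u\cdot P_C f)_j$, which are exactly the two terms on the right-hand side of the claimed formula.

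It then remains to show that the leftover first-order terms vanish, i.e.
\[
\sum_{a,b,c}(\partial_a f)\,C_{jabc}\,(\partial_b u_c) - \sum_{a,b,c} C_{jabc}\,(\partial_b f)\,(\partial_a u_c) = 0.
\]
In the second sum I would relabel the dummy indices $a \leftrightarrow b$, rewriting it as $\sum_{a,b,c} C_{jbac}(\partial_a f)(\partial_b u_c)$; the symmetry hypothesis $C_{jbac} = C_{jabc}$ then identifies it with the first sum, so the difference is zero. This gives the general identity. Finally, specializing to $k = l = 1$ and setting $(C_{jabc}) = (\sigma_{ab})$, which is symmetric by hypothesis, yields the displayed divergence-form identity $\nabla\cdot f^2\sigma\nabla(f^{-1}u) = f\,\nabla\cdot\sigma\nabla u - u\,\nabla\cdot\sigma\nabla f$.

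I do not expect a genuine obstacle here: the computation is routine, and the single point that must actually be observed is the pairwise cancellation of the first-order terms, which is precisely where the symmetry of $C$ in the pair $(a,b)$ is used — the identity would fail without it.
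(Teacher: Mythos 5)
Your computation is correct and follows essentially the same route as the paper's proof: expand $\partial_b(f^{-1}u_c)$ by the product rule, identify the terms $f(P_Cu)_j$ and $-(u\cdot P_Cf)_j$, and cancel the two leftover first-order terms by relabeling $a\leftrightarrow b$ and invoking the symmetry $C_{jabc}=C_{jbac}$. No gaps.
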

  \begin{proof}
   Let $f$ be a smooth positive function. We use Einstein summation over repeated indices and calculate
  \begin{align*}
 P_{f^2C}(f^{-1}u)_j&=\p_a \big(f^2 C_{jabc}\p_b(f^{-1}u_c)\big)=\p_a\big(f C_{jabc}\p_b u_c\big) + \p_a \big(f^2 (-f^{-2}\p_b f) C_{jabc}u_c\big)  \\
 &=(\p_a f)C_{jabc}(\p_b u_c) + f P_Cu_j - \p_a \big(u_c C_{jabc}\p_b f)\big) \\ 
 &=(\p_a f)C_{jabc}(\p_b u_c) + f P_Cu_j- (\p_au_c)C_{jabc}(\p_b f)-u_c (P_Cf)_{jc} \\
 &=f P_Cu_j- u \cdot P_Cf_j,
\end{align*}
where in the last equality we used $C_{jabc}=C_{jbac}$.
\end{proof}
%
 
\section{Rigidity of conformal transformations} 

In this section, $(M,g)$ is a smooth connected Lorentzian manifold with boundary, and we show that the identity map is the only conformal mapping on $M$ fixing an open subset of the boundary. Diffeomorphisms in this section are assumed to be diffeomorphic up to boundary.

\begin{Proposition}\label{lem_conf_triv}
Let $\Gamma \subset \p M$ be open, nonempty and timelike.
Let $\phi$ be a conformal diffeomorphism of $(M,g)$ that satisfies
$\phi|_{\Gamma} = \id$ and $\phi^* g = e^{2h} g$ for $h \in C^\infty(M)$.
Then $\phi = \id$.
\end{Proposition}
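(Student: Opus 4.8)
The plan is to reduce the statement to a uniqueness result for a geodesic-type flow or, more directly, to exploit that a conformal map fixing an open timelike hypersurface must fix it to infinite order and then propagate this rigidity into the interior. Concretely, I would argue as follows.

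\textbf{Step 1: Infinite-order fixing along $\Gamma$.} Since $\phi|_\Gamma = \id$, the differential $d\phi$ restricted to tangent vectors of $\Gamma$ is the identity. Because $\phi^* g = e^{2h} g$ and $\phi$ preserves $\Gamma$ pointwise, comparing the induced metrics on $\Gamma$ forces $e^{2h}|_\Gamma = 1$, i.e.\ $h|_\Gamma = 0$. Next, because $\Gamma$ is timelike, the $g$-normal direction $\nu$ to $\Gamma$ is spacelike and well defined; $d\phi$ must send $\nu$ to a vector of the same $g$-length up to the conformal factor, and since $\phi$ fixes $\Gamma$ and (being a diffeomorphism up to the boundary preserving orientation of the boundary) cannot flip the inward normal, one gets $d\phi(\nu) = \nu$ along $\Gamma$. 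Hence $d\phi = \id$ on $T_pM$ for all $p \in \Gamma$. The idea is then to bootstrap: differentiating the relations $\phi|_\Gamma = \id$, $h|_\Gamma = 0$, $d\phi|_\Gamma = \id$ tangentially and using the conformal Killing-type equations that $\phi$ satisfies infinitesimally along a path to the identity, show that all higher normal derivatives of $\phi - \id$ and of $h$ vanish on $\Gamma$.

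\textbf{Step 2: Unique continuation / analyticity-free propagation.} With $\phi$ and $h$ vanishing to infinite order along $\Gamma$, I would invoke a unique continuation principle. One clean route: a conformal diffeomorphism is determined by its $2$-jet at a point (this is the standard finiteness of the conformal group's isotropy, valid in dimension $\geq 3$, which is where the hypothesis $n \ge 2$, i.e.\ $\dim M \ge 3$, enters). Since $d\phi = \id$ and the relevant second-order data agree with those of $\id$ at points of $\Gamma$, one concludes $\phi = \id$ on a neighborhood of $\Gamma$ in $M$. Then a connectedness argument extends this: the set where $\phi = \id$ (together with agreement of jets) is open and closed in $M$, hence all of $M$. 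Alternatively, one can phrase Step 2 via the fact that $h$ satisfies a second-order elliptic-in-the-conformal-direction equation (coming from $\phi^*g = e^{2h}g$ composed with the conformal change of, say, the Schouten or Cotton tensor) and apply Aronszajn-type unique continuation for $h$, then recover $\phi$ from $h$.

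\textbf{Main obstacle.} The delicate point is Step 1's bootstrap: turning the boundary condition $\phi|_\Gamma = \id$ into \emph{infinite-order} vanishing of $\phi - \id$. A conformal map need not be analytic, so one cannot simply say "it vanishes on an open set hence everywhere." The argument must instead extract, order by order, PDE relations that the jet of $\phi$ satisfies — essentially that an infinitesimal conformal deformation fixing $\Gamma$ to order $k$ fixes it to order $k+1$ — and this is where care with the timelike (nondegenerate) nature of $\Gamma$, the dimension $\ge 3$ rigidity of conformal jets, and the interplay with the factor $h$ all come together. I expect to carry this out by writing $\phi$ in boundary normal coordinates adapted to $\Gamma$, expanding $\phi^* g = e^{2h} g$ in powers of the normal variable, and solving the resulting hierarchy; the leading obstruction is controlled precisely because in dimension $\ge 3$ the conformal group is finite-dimensional and its isotropy at a point injects into the $2$-jets.
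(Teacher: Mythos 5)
Your overall skeleton (fix the $1$-jet on $\Gamma$, use finite-dimensionality of the conformal group in dimension $\ge 3$ to propagate inward, finish by an open--closed argument) is the same as the paper's, and your Step 1 computation of the $1$-jet --- nondegeneracy of the induced metric on the timelike $\Gamma$ forcing $e^{2h}|_\Gamma=1$, then $d\phi(\nu)=\nu$ --- matches the paper's Lemma~\ref{lem_conf_triv_aux}. But there are two genuine gaps. First, you hang the proof on showing that $\phi-\id$ and $h$ vanish to \emph{infinite} order on $\Gamma$, you identify this bootstrap as the main obstacle, and you do not carry it out. This step is both unresolved and unnecessary: the paper needs only the $1$-jet of $\phi$ and the $1$-jet of $h$ (i.e.\ $h=0$ and $dh=0$) on $\Gamma$. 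Second, your fallback route --- ``the relevant second-order data agree with those of $\id$ at points of $\Gamma$, hence $\phi=\id$ by $2$-jet rigidity'' --- asserts exactly the thing that has to be proved. To get the $2$-jet you need (a) $dh|_\Gamma=0$, which does \emph{not} follow from comparing induced metrics on $\Gamma$; it requires expanding $\phi^*g=e^{2h}g$ to second order in the boundary normal variable (this is the second half of the paper's Lemma~\ref{lem_conf_triv_aux}); and (b) the structure equations showing that $\p^2\phi$ and $\p^2 h$ are algebraically determined by $(d\phi,dh)$ through the Christoffel symbols and the Schouten tensor (the paper's formulas~\eqref{de_dphi} and~\eqref{de_dh}). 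Without (a) and (b) spelled out, ``$2$-jet rigidity'' has nothing to bite on. Your alternative suggestion of Aronszajn-type unique continuation for $h$ is also off target: the setting is Lorentzian, and the relevant equation for $h$ is a full Hessian (prolongation) identity, not an elliptic PDE.

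For comparison, the paper's resolution is to turn (b) into a first-order ODE system for $(d\phi,dh,\phi)$ along an arbitrary curve $\gamma$ from $\Gamma$ to a given interior point: contracting~\eqref{de_dphi} and~\eqref{de_dh} with $\dot\gamma$ gives a Lipschitz ODE for which $(\id,0,\gamma)$ is a solution, and the boundary $1$-jet data from Lemma~\ref{lem_conf_triv_aux} supplies matching initial conditions, so ODE uniqueness plus an open--closed argument in the path parameter gives $\phi=\id$. This is the concrete, self-contained form of the ``determined by $2$-jets'' principle you wanted to invoke, and it completely bypasses the infinite-order vanishing on $\Gamma$ that your plan gets stuck on.
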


Timelike could be replaced by spacelike in the proposition. 
With the natural replacements, the proposition holds also for Riemannian manifolds with boundary. In the case of a Riemannian manifold the proposition was proven in~\cite{Li} (with $\Gamma = \p M$) and more generally for conformal mappings between different Riemannian manifolds $(M,g)$ and $(\tilde M, \tilde g)$ in \cite{LL}. 
The work~\cite{LL} also contains a related uniqueness result for Lorentzian conformal mappings corresponding to the case that $\Gamma$ is a Cauchy surface.  

\begin{Lemma}
Let $\phi : (M,g) \to (\tilde{M},\tilde g)$ be a diffeomorphism satisfying 
$\phi^* \tilde g = e^{2h} g$ for $h \in C^\infty(M)$.
We have in local coordinates
    \begin{align}\label{de_dphi}
\frac{\p^2\phi^{\s j}}{\p x^k \p x^l}
= 
\mathcal N^j_{kl}(d\phi, dh) + 
\frac{\p\phi^j}{\p x^a} \Gamma^a_{kl}(g) 
- \frac{\p\phi^a}{\p x^k} \frac{\p\phi^{\s b}}{\p x^l}
\Gamma^j_{ab}(\tilde g) \circ \phi,
    \end{align}
where
\[
 \mathcal N^j_{kl}(d\phi, dh)=\frac{\p \phi^j}{\p x^l}\frac{\p h}{\p x^k}
+ \frac{\p \phi^j}{\p x^k}\frac{\p h}{\p x^l} 
- g^{ab}\frac{\p \phi^j}{\p x^a} \frac{\p h}{\p x^b} g_{kl},
\]
satisfies $\mathcal{N}^j_{kl}(d\phi,0)=0$. Here $\Gamma^a_{kl}(g)$ and $\Gamma^j_{ab}(\tilde g)$ are the Christoffel symbols of $g$ and $\tilde{g}$ respectively in local coordinates. 
\end{Lemma}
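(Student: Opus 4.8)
The plan is to read the hypothesis $\phi^*\tilde g = e^{2h}g$ as saying that $\phi$ is an isometry from $(M,\bar g)$ onto $(\tilde M,\tilde g)$, where $\bar g := e^{2h}g$, and to extract the formula from the classical transformation rule for Christoffel symbols under a change of variables. In local coordinates $x$ on $M$ and $y$ on $\tilde M$, with $y^j = \phi^j(x)$, the fact that $\bar g$ is the coordinate expression of $\tilde g$ pulled back by $\phi$ gives
\[
\Gamma^c_{kl}(\bar g)
= \frac{\p x^c}{\p y^j}\,\frac{\p\phi^a}{\p x^k}\,\frac{\p\phi^{\s b}}{\p x^l}\,\Gamma^j_{ab}(\tilde g)\circ\phi
+ \frac{\p x^c}{\p y^j}\,\frac{\p^2\phi^{\s j}}{\p x^k\p x^l},
\]
where $(\p x^c/\p y^j)$ is the inverse of the Jacobian matrix $(\p\phi^j/\p x^c)$, which exists since $\phi$ is a diffeomorphism. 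Contracting with $\p\phi^j/\p x^c$ and summing over $c$ removes the inverse Jacobian and isolates the second derivatives,
\[
\frac{\p^2\phi^{\s j}}{\p x^k\p x^l}
= \frac{\p\phi^j}{\p x^c}\,\Gamma^c_{kl}(\bar g)
- \frac{\p\phi^a}{\p x^k}\,\frac{\p\phi^{\s b}}{\p x^l}\,\Gamma^j_{ab}(\tilde g)\circ\phi.
\]

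Next I would substitute the elementary (and dimension-independent, unlike (\ref{scalar_curvature_conformal_scaling})) conformal rescaling identity
\[
\Gamma^c_{kl}(e^{2h}g) = \Gamma^c_{kl}(g) + \delta^c_k\,\p_l h + \delta^c_l\,\p_k h - g_{kl}\,g^{ca}\,\p_a h
\]
into the last display. The term $\p_c\phi^j\,\Gamma^c_{kl}(g)$ is the $\Gamma^a_{kl}(g)$-term of (\ref{de_dphi}); the $\Gamma^j_{ab}(\tilde g)$-term is already present; and the three remaining terms are
\[
\frac{\p\phi^j}{\p x^k}\,\frac{\p h}{\p x^l} + \frac{\p\phi^j}{\p x^l}\,\frac{\p h}{\p x^k} - g^{ab}\,\frac{\p\phi^j}{\p x^a}\,\frac{\p h}{\p x^b}\,g_{kl},
\]
which is exactly $\mathcal N^j_{kl}(d\phi,dh)$ after relabelling the contracted indices. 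Since each of these carries a factor of $dh$, we also get $\mathcal N^j_{kl}(d\phi,0)=0$.

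Because the transformation rule for Christoffel symbols is not presupposed in the paper, I would also include a short self-contained derivation of the second display above. The idea is to differentiate the conformality relation $\tilde g_{ab}(\phi(x))\,\p_k\phi^a\,\p_l\phi^b = \bar g_{kl}(x)$ with respect to $x^m$ and then take the Koszul-type combination that adds the equations obtained with index triples $(m,k;l)$ and $(l,m;k)$ and subtracts the one with $(k,l;m)$ (here $(p,q;r)$ means $k\mapsto p$, $l\mapsto q$, and differentiation in $x^r$). Using $\p_k\p_l\phi^a=\p_l\p_k\phi^a$ and the symmetry of $\tilde g$, the second-derivative terms cancel in pairs except for $2\,\tilde g_{ab}(\phi)\,\p_m\phi^a\,\p_k\p_l\phi^b$, while the first-derivative-of-metric terms reorganize into Christoffel symbols through $\p_k\bar g_{ml}+\p_l\bar g_{mk}-\p_m\bar g_{kl}=2\,\bar g_{ma}\Gamma^a_{kl}(\bar g)$ and the analogous identity for $\tilde g$. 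Cancelling the matrix $\big(\tilde g_{ab}(\phi)\,\p_m\phi^a\big)_{m,b}$ (invertible since $d\phi$ and $\tilde g$ are nondegenerate) and using the conformality relation once more to trade $\tilde g\circ\phi$ for $\bar g$ and $d\phi$, one recovers $\p_k\p_l\phi^j=\p_a\phi^j\,\Gamma^a_{kl}(\bar g)-\p_k\phi^a\,\p_l\phi^b\,\Gamma^j_{ab}(\tilde g)\circ\phi$. I expect the only real difficulty here to be the index bookkeeping — tracking which Jacobian or inverse Jacobian enters each contraction and keeping the two metrics apart; there is no analytic obstacle.
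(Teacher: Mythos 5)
Your proposal is correct and follows essentially the same route as the paper: equate the Christoffel symbols of $\phi^*\tilde g$ (via the standard change-of-coordinates transformation rule) with those of $e^{2h}g$ (via the conformal rescaling identity), then contract with the Jacobian of $\phi$ to isolate $\p^2\phi^j/\p x^k\p x^l$. The only difference is that you also sketch a Koszul-type derivation of the transformation rule, which the paper simply cites as a standard formula.
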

\begin{proof}
We write $\tilde x = \phi(x)$ and use the typical convention to denote by $\p x^i/\p \tilde{x}^m$ the components of the differential of the inverse of $\phi$ (evaluated at $\phi$). We then have the standard formulas 
    \begin{align}\label{de_dphi_aux1}
\Gamma^i_{kl}(\phi^*\tilde g) =
\frac{\partial x^i}{\partial \tilde x^m}\,
\frac{\partial \tilde x^a}{\partial x^k}\,
\frac{\partial \tilde x^b}{\partial x^l}\,
\Gamma^m_{ab}(\tilde g) \circ \phi
+ 
\frac{\partial^2 \tilde x^m}{\partial x^k \partial x^l}\,
\frac{\partial x^i}{\partial \tilde x^m},
\end{align}
 and 
    \begin{align}\label{de_dphi_aux2}
\Gamma^i_{kl}(e^{2h} g) =
\Gamma^i_{kl}(g)
+ \frac{\p h}{\p x^k} \delta^i_l
+ \frac{\p h}{\p x^l} \delta^i_k
- g^{ia} \frac{\p h}{\p x^a} g_{kl},
    \end{align}
see e.g.~\cite{Be07} for the latter formula.
The assumption $\phi^* \tilde g = e^{2h} g$ implies that $\Gamma^i_{kl}(\phi^*\tilde g) = \Gamma^i_{kl}(e^{2h} g)$, and the claimed formula follows after multiplying (\ref{de_dphi_aux1}) and (\ref{de_dphi_aux2}) by 
$\p\tilde x^j /\p x^i$.
\end{proof}

Recall, that we denote the dimension $n+1$ of $M$ by $m$.

\begin{Lemma}\label{transformation_of_Shouten}
Let $\phi : (M,g) \to (\tilde M,\tilde g)$ be a diffeomorphism satisfying 
$\phi^* \tilde g = e^{2h} g$ for $h \in C^\infty(M)$.
We have in local coordinates
    \begin{align}\label{de_dh}
\frac{\p^2 h}{\p x^k \p x^l}
= \mathcal M_{kl}(dh) + P_{kl}(g) - \frac{\p\phi^a}{\p x^k} \frac{\p\phi^b}{\p x^l} {P}_{ab}(\tilde g) \circ \phi,
    \end{align}
    where 
    \[
     \mathcal M_{kl}(dh)=\p_k h\p_l h-m g(dh,dh)\s g_{kl}
    \]
satisfies $\mathcal M_{kl}(0) = 0$.
Here $P_{ab}(g)$ is the Schouten tensor of $g$ defined in terms of the Ricci tensor $R_{ab}$ and scalar curvature $R$ as
    \begin{align*}
 P_{ab}(g) =
\frac{1}{m-2}\big(R_{ab}(g)- \frac{1}{2(m-1)} R(g) g_{ab}\big).
    \end{align*}
%
\end{Lemma}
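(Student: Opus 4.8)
The plan is to derive the transformation rule for the Schouten tensor by combining the transformation rule for the Christoffel symbols (the previous lemma, formula \eqref{de_dphi}) with the standard conformal transformation law of the Schouten tensor. Recall that under a conformal change $\tilde g = e^{2h} g$ on a single manifold the Schouten tensor transforms by the well-known formula
\[
P_{kl}(e^{2h} g) = P_{kl}(g) - \nabla_k \nabla_l h + \p_k h\, \p_l h - \tfrac{1}{2} g(dh,dh)\, g_{kl},
\]
where $\nabla$ is the Levi-Civita connection of $g$; this is the reason the Schouten tensor is the natural object here rather than the Ricci or scalar curvature separately. First I would write $\phi^* \tilde g = e^{2h} g$ and use naturality of the Schouten tensor under diffeomorphisms, $\phi^* P(\tilde g) = P(\phi^* \tilde g) = P(e^{2h} g)$, which in coordinates reads $\frac{\p\phi^a}{\p x^k} \frac{\p\phi^b}{\p x^l} P_{ab}(\tilde g) \circ \phi = P_{kl}(e^{2h} g)$. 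Then I would substitute the conformal transformation law above for the right-hand side.

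The next step is to expand $\nabla_k \nabla_l h = \p_k \p_l h - \Gamma^a_{kl}(g)\, \p_a h$ and solve for $\p_k \p_l h$. This gives
\[
\p_k \p_l h = \Gamma^a_{kl}(g)\, \p_a h + P_{kl}(g) - \frac{\p\phi^a}{\p x^k} \frac{\p\phi^b}{\p x^l} P_{ab}(\tilde g)\circ\phi + \p_k h\, \p_l h - \tfrac12 g(dh,dh)\, g_{kl}.
\]
The point of the lemma, however, is to present this in the clean form \eqref{de_dh} with no Christoffel term visible, so I would need to absorb the $\Gamma^a_{kl}(g)\,\p_a h$ term. This is exactly where formula \eqref{de_dphi} from the preceding lemma enters: there the second derivatives of $\phi$ are expressed in terms of $\Gamma(g)$, $\Gamma(\tilde g)\circ\phi$, and the quadratic term $\mathcal N^j_{kl}(d\phi, dh)$. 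One should check that the combination appearing here matches; more precisely, I expect the cleanest route is to observe that both \eqref{de_dphi} and the sought formula \eqref{de_dh} are obtained from the same source — the identity $\Gamma(\phi^*\tilde g) = \Gamma(e^{2h}g)$ contracted appropriately — the former by extracting the ``trace-free part in the conformal weight'' and the latter by extracting the relevant contraction that produces the Schouten/Ricci combination. So the bookkeeping task is to take traces and contractions of \eqref{de_dphi} (differentiating it once more, or rather recognizing that the Schouten tensor is a specific second-order combination of the metric) to recover \eqref{de_dh}.

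Concretely, I would proceed as follows: (1) recall $R_{kl}(\phi^*\tilde g) = \frac{\p\phi^a}{\p x^k}\frac{\p\phi^b}{\p x^l} R_{ab}(\tilde g)\circ\phi$ and $R(\phi^*\tilde g) = e^{-2h} R(e^{2h}g)$ from naturality, hence $P_{kl}(\phi^*\tilde g) = \frac{\p\phi^a}{\p x^k}\frac{\p\phi^b}{\p x^l} P_{ab}(\tilde g)\circ\phi$; (2) invoke the standard conformal law $P(e^{2h}g) = P(g) - \nabla^g dh + dh\otimes dh - \tfrac12 g(dh,dh)\, g$; (3) rewrite $\nabla^g dh$ in coordinates and rearrange to isolate $\p^2 h$; (4) note that $\p_k h\,\p_l h - \tfrac12 g(dh,dh) g_{kl}$ together with the leftover terms must be reorganized into $\mathcal M_{kl}(dh) = \p_k h \p_l h - m\, g(dh,dh)\, g_{kl}$ — here I would double-check the coefficient, since the factor $m$ rather than $\tfrac12$ suggests that the authors have folded in a further trace identity or a different normalization, possibly coming from also using the trace of \eqref{de_dphi} or the transformation of $R$ under the conformal scaling; verifying that this coefficient is correct, and that the residual $\Gamma$-terms genuinely cancel against the corresponding terms hidden in $\frac{\p\phi^a}{\p x^k}\frac{\p\phi^b}{\p x^l}P_{ab}(\tilde g)\circ\phi$ via \eqref{de_dphi}, is the main obstacle. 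The rest is a routine, if somewhat lengthy, coordinate computation.
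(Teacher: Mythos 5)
Your overall strategy is the same as the paper's: the paper also proves \eqref{de_dh} by combining the conformal transformation law of the Schouten tensor (which it derives from the transformation law of the Ricci tensor rather than quoting it directly) with tensoriality, $P_{kl}(\phi^*\tilde g) = \frac{\p\phi^a}{\p x^k}\frac{\p\phi^b}{\p x^l}P_{ab}(\tilde g)\circ\phi$, and the identity $P(\phi^*\tilde g)=P(e^{2h}g)$. Up to and including your step (3), you are reproducing the paper's argument.

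Where your proposal goes wrong is in how you propose to close the two discrepancies you correctly notice. First, the coefficient: your suspicion is right that a careful computation gives $-\tfrac12 g(dh,dh)\,g_{kl}$; indeed the paper's own intermediate formulas for $R_{ab}(e^{2h}g)$ and $R(e^{2h}g)(e^{2h}g)_{kl}$ yield exactly $P_{kl}(e^{2h}g)=P_{kl}(g)-(\p_k\p_l h-\p_k h\,\p_l h)-\tfrac12 q(dh)g_{kl}$, so the factor $m$ in the stated $\mathcal M_{kl}$ does not come from any hidden trace identity --- there is nothing to ``fold in.'' Second, the Christoffel term: there is no cancellation of $\Gamma^a_{kl}(g)\p_a h$ against the $P_{ab}(\tilde g)$ term, and Lemma \eqref{de_dphi} plays no role in this proof at all; the paper simply writes the Hessian of $h$ as $\p_a\p_b h$ throughout (i.e., the covariant Hessian in disguise), so the term you are trying to absorb never appears in its derivation. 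Hunting for a cancellation via \eqref{de_dphi} would not succeed, and your proof cannot be completed along that route. The saving grace --- which you should state rather than leave as an ``obstacle'' --- is that both discrepancies are immaterial for how the lemma is used: in the proof of Proposition \ref{lem_conf_triv} one only needs that the right-hand side of \eqref{de_dh} is Lipschitz in $(d\phi,dh,\phi)$ and that the non-curvature terms vanish when $dh=0$; both the corrected coefficient $\tfrac12$ and the extra term $\Gamma^a_{kl}(g)\p_a h$ (which is linear in $dh$) respect this.
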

\begin{proof}
We write $q(\xi) = g(\xi,\xi)$.
Then we have the formulas
    \begin{align*}
R_{ab}(e^{2h} g) =
R_{ab}(g)
-(m-2)(\p_a \p_b h - \p_a h \p_b h)
-(\Box_g h + (m-2) q(dh)) g_{ab},
    \end{align*}
see e.g.~\cite{PL}. Consequently, we obtain
    \begin{align*}
R(e^{2h} g) (e^{2h} g)_{kl} 
&=
R(g)g_{kl}
-(m-2)(\Box_g h - q(dh)) g_{kl}
-m(\Box_g h + (m-2) q(dh)) g_{kl}
\\&=
R(g)g_{kl}
-2(m-1)(\Box_g h) g_{kl} - (m-1)(m-2) q(dh) g_{kl}.
    \end{align*}
Hence
    \begin{align*}
P_{kl}(e^{2h} g) 
= 
P_{kl}(g) 
-(\p_k \p_l h - \p_k h \p_l h)
-m q(dh) g_{kl}.
    \end{align*}
The Schouten tensor is a covariant two tensor, and hence
    \begin{align*}
P_{kl}(\phi^*\tilde g) = \frac{\p\phi^a}{\p x^k} \frac{\p\phi^b}{\p x^l} P_{ab}(\tilde g) \circ \phi.
    \end{align*}
The claim follows from $P_{ab}(\phi^*\tilde g) = P_{ab}(e^{2h} g)$.
\end{proof}

We have the following variation of \cite[Lemma 4.1]{Li}. 
\begin{Lemma}\label{lem_conf_triv_aux}
Let $\Gamma \subset \p M$ be open, nonempty and timelike.
Let $\phi$ be a conformal diffeomorphism of $(M,g)$
satisfying
$\phi|_{\Gamma} = \id$ and $\phi^* g = e^{2h} g$ for $h \in C^\infty(M)$.
Then the 1-jets of $\phi$ and $\id$ coincide at all points on $\Gamma$. Moreover, $h = 0$ and $dh = 0$ on $\Gamma$.
\end{Lemma}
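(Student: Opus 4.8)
The plan is to work in boundary normal coordinates for $g$ adapted to $\Gamma$, in which $\Gamma$ sits in a coordinate hyperplane, and to exploit that $\phi$ restricts to the identity on $\Gamma$ together with the transformation formulas of the two preceding lemmas. Since $\phi|_\Gamma = \id$, all tangential derivatives of $\phi$ along $\Gamma$ are already determined: for a coordinate system $(x^1,\dots,x^m)$ with $\Gamma = \{x^m = 0\}$ locally, we get $\p\phi^j/\p x^\alpha = \delta^j_\alpha$ for $\alpha < m$ and all $j$, at points of $\Gamma$. Likewise $h|_\Gamma$ is a function on $\Gamma$ whose tangential derivatives are its intrinsic derivatives, so the only unknown first-order data on $\Gamma$ are the normal derivatives $\p\phi^j/\p x^m$ and $\p h/\p x^m$, and the value $h|_\Gamma$ itself.

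First I would pin down $h|_\Gamma$ and the normal derivative $\p h/\p x^m$ on $\Gamma$. Since $\phi|_\Gamma = \id$ and $\Gamma$ is timelike (hence a nondegenerate hypersurface), the pullback relation $\phi^*g = e^{2h}g$ restricted to vectors tangent to $\Gamma$ gives $g|_{T\Gamma} = e^{2h} g|_{T\Gamma}$ on $\Gamma$, forcing $e^{2h} = 1$, i.e. $h = 0$ on $\Gamma$; the tangential derivatives of $h$ along $\Gamma$ then vanish automatically. To get the normal derivative, I would differentiate the pullback identity once in the normal direction, or equivalently use that the full differential $d\phi$ on $\Gamma$ must be a (Minkowski-type) conformal linear map of $T_pM$ with conformal factor $1$; writing $\p\phi^j/\p x^m = \delta^j_m + N^j$ with $N^j$ the unknown normal-jet correction, the conformality condition $g_{ab}(\p_k\phi^a)(\p_l\phi^b) = e^{2h} g_{kl}$ at order zero on $\Gamma$ becomes a linear-algebraic constraint on the $N^j$. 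Combined with $\phi$ mapping $\Gamma$ into $\p M$ and $\p M$ being timelike, this should force $N^j = 0$, so the $1$-jet of $\phi$ agrees with that of $\id$ on $\Gamma$; feeding this back, the boundary condition on $h$ together with $P_{ab}(\phi^*\tilde g) = P_{ab}(e^{2h}g)$ from Lemma~\ref{transformation_of_Shouten} and the known tangential data gives $\p_m h = 0$ on $\Gamma$ as well. This is the same bootstrap as in \cite[Lemma~4.1]{Li}, just adapted from $\Gamma = \p M$ to an open timelike piece.

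The main obstacle I anticipate is the normal-derivative step: showing that the conformal-linear-map constraint plus "$\phi$ preserves the boundary near $\Gamma$" actually kills the off-diagonal and normal components $N^j$, rather than leaving a one-parameter family (a genuine conformal motion). The key input that rules out nontrivial solutions is precisely that $\Gamma$ is a nondegenerate (timelike) hypersurface: $\phi$ maps a neighborhood of $\Gamma$ in $M$ into $M$ and $\Gamma$ into $\p M$, so $d\phi_p$ must preserve the tangent space to the boundary and the inward normal cone, and a conformal linear map with factor $1$ fixing a nondegenerate hyperplane pointwise and preserving the transverse half-line is the identity. One has to be a little careful that $g|_{T\Gamma}$ is nondegenerate (true since $\Gamma$ timelike), and that the same argument does not secretly need $g$ Riemannian — it does not, only nondegeneracy of the induced metric on $\Gamma$ is used. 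Once the $1$-jet of $\phi$ is controlled on $\Gamma$, extracting $dh|_\Gamma = 0$ is a short computation from Lemma~\ref{transformation_of_Shouten} evaluated on $\Gamma$, using that all terms there except the normal derivative of $h$ are already known to vanish.
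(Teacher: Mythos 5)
Your argument for the first part ($h=0$ on $\Gamma$ and the $1$-jet of $\phi$) is essentially the paper's: both work in boundary normal coordinates, restrict $\phi^*g=e^{2h}g$ to $\Gamma$, and use nondegeneracy of the induced metric on the timelike hypersurface to kill the conformal factor and the off-diagonal normal components, then use that $\phi$ preserves the inward normal direction to fix the sign of the normal-normal component. (You are in fact slightly more explicit than the paper about why $a=+1$ rather than $a=-1$, which the paper passes over.) Where you genuinely diverge is the last step, $dh|_\Gamma=0$: the paper expands $\phi$ to second order in the normal coordinate $r$, computes $\phi^*g$ to first order in $r$, and reads off the vanishing of $\p_r h$ from the $dy^j\otimes dy^k$ components at order $r$; you instead evaluate the Schouten-tensor identity of Lemma~\ref{transformation_of_Shouten} on $\Gamma$, where $d\phi=\id$ and $\phi=\id$ make the two curvature terms cancel, leaving $\p_k\p_l h=\mathcal M_{kl}(dh)$. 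Your route does close: for tangential $k,l$ the left side vanishes (since $h\equiv 0$ on $\Gamma$) and the right side reduces to a nonzero multiple of $g(dh,dh)\,g_{kl}=(\p_r h)^2 h_{kl}$, so nondegeneracy of $h_{kl}$ forces $\p_r h=0$. The paper's Taylor-expansion argument is more elementary and self-contained (it never needs curvature); yours reuses machinery already set up for Proposition~\ref{lem_conf_triv} and avoids the second-order expansion of $\phi$, at the cost of the small extra observation that the tangential Hessian of $h$ vanishes on $\Gamma$. Both are correct.
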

\begin{proof}
We express $g$ and $\phi$ in boundary normal coordinates $(r,y)$ where $y$ are local coordinates on $\Gamma$ and where $\{r=0\}$ corresponds to $\Gamma$. There holds
    \begin{align*}
g = dr \otimes dr + h_{jk} dy^j \otimes dy^k,
    \end{align*}
and $\phi|_{\Gamma} = \id$ implies
    \begin{align}\label{T_exp_of_phi}
\phi^0 = a(y)r + \mathcal O(r^2),
\quad
\phi^j = y^j + b^j(y)r + \mathcal O(r^2).
    \end{align}
Moreover, by using the identity $\phi^*(df)=d(\phi^*f)$, valid for any $f$, we have
    \begin{align*}
\phi^* g = d\phi^0 \otimes d\phi^0 + (h_{jk} \circ \phi) d\phi^j \otimes d\phi^k.
    \end{align*}
On the other hand, by using~\eqref{T_exp_of_phi}, we obtain the formula
    \begin{align*}
\phi^* g = a^2 dr \otimes dr + h_{jk} (
b^j b^k dr \otimes dr + 
b^j dr \otimes dy^k +  
b^k dy^j \otimes dr +  
dy^j \otimes dy^k),
    \end{align*}
    which holds on $\Gamma$. We write $\lambda = e^{2h}$ and have
    \begin{align*}
\phi^* g = \lambda g = \lambda dr \otimes dr + \lambda h_{jk} dy^j \otimes dy^k.
    \end{align*}
By comparing the $dy^j \otimes dy^k$ components, we see that $\lambda = 1$ on $\Gamma$ since $\Gamma$ is not lightlike. Moreover, the
$dr \otimes dy^k$ components give $b^j = 0$, and $dr \otimes dr$ component give $a=1$. Thus the $1$-jet of $\phi$ coincides with that of the identity mapping on $\Gamma$.

It remains to show that $dh = 0$ on $\Gamma$. By using $a(y)=1$ and $b^j(y)=0$, we rewrite~\eqref{T_exp_of_phi} as
    \begin{align*}
\phi^0 = r+ \frac12 \tilde a(y)r^2 + \mathcal O(r^3),
\quad
\phi^j = y^j + \frac12 \tilde b^j(y)r^2 + \mathcal O(r^3).
    \end{align*}
Then
    \begin{align*}
\phi^* g
=
(1+2\tilde a r) dr \otimes dr
+ h_{jk}(r\tilde b^j dr \otimes dy^k +  
r \tilde b^k dy^j \otimes dr +  
dy^j \otimes dy^k) + \mathcal O(r^2)
    \end{align*}
holds on $\Gamma$. By writing $\lambda = 1 + c(y) r + \mathcal O(r^2)$, we have
    \begin{align*}
\lambda g = (1+cr) dr \otimes dr + (1+cr) h_{jk} dy^j \otimes dy^k + \mathcal O(r^2).
    \end{align*}
By comparing the $dy^j \otimes dy^k$ components, we see that $c = 0$ on $\Gamma$.  Thus $dh|_\Gamma=0$.
\end{proof}


We are now ready to prove Proposition~\ref{lem_conf_triv}.
\begin{proof}[Proof of Proposition \ref{lem_conf_triv}]
Let $z\in M$, and let us show that $\phi(z)=z$. For this let $y\in \Gamma$ and let $\gamma:[0,1]\times M$ be smooth path on $M$ such that $\gamma(0)=y$ and $\gamma(1)=z$. Let us define
\[
 B\subset [0,1]
\]
as the largest interval of points $s\in [0,1]$ containing zero such that $\phi(\gamma(s))=\gamma(s)$, $d\phi$ at $\gamma(s)$ is the identity on $T_{\gamma(s)}M$, and $dh = 0$ at $\gamma(s)=0$. We will show that $B=[0,1]$, which then proves $\phi(z)=z$ and consequently the claim of the proposition.

Let $(x^0, \dots, x^n)$ be local coordinates in a neighborhood $U$ of $y$.
It follows from $\phi(y) = y$ that both $\gamma(s)$ and $\phi(\gamma(s))$ are in $U$ or small $s > 0$.
We evaluate the formulas~\eqref{de_dphi} and~\eqref{de_dh} at $\gamma(s)$ with $\tilde g=g$, and obtain in the fixed local coordinates 
    \begin{align}\label{de_dphi_proof}
\frac{\p^2\phi^{\s j}}{\p x^k \p x^l}\Big|_{\gamma(s)}
&= 
\left(\mathcal N^j_{kl}(d\phi, dh)+ 
\frac{\p\phi^j}{\p x^c} \Gamma^c_{kl}(g)\right)\Big|_{\gamma(s)} 
- \left(\frac{\p\phi^a}{\p x^k} \frac{\p\phi^{\s b}}{\p x^l}\right)\Big|_{\gamma(s)}
\Gamma^j_{ab}(g)\Big|_{\phi(\gamma(s))},
\\\label{de_dh_proof}
\frac{\p^2 h}{\p x^k \p x^l}\Big|_{\gamma(s)}
&= \left(\mathcal M_{kl}(dh)+ P_{kl}(g)\right)\Big|_{\gamma(s)} - \left(\frac{\p\phi^a}{\p x^k}\frac{\p\phi^b}{\p x^l}\right)\Big|_{\gamma(s)} {P}_{ab}(g)\Big|_{\phi(\gamma(s))}.
    \end{align}
    
Let us define a matrix field $X$ over $\gamma$ and vector fields $Y$ and $Z$ over $\gamma$ by
\begin{align*}
 X^j_l(s)=\frac{\p \phi^j}{\p x^l}\circ \gamma(s), \quad
 Y_l(s)=\frac{\p h}{\p x^l}\circ \gamma(s), \quad
 Z^j(s)=\phi^j\circ \gamma(s).
\end{align*}
Observe that
\[
\p_sX^j_l(s)=\p_s\Big(\frac{\p \phi^j}{\p x^l}\circ \gamma(s)\Big)=\frac{\p^2 \phi^j}{\p x^k \p x^l}\Big|_{\gamma(s)}\dot\gamma^k(s),
\]
and that analogous formulas hold for $\p_s Y_l$ and $\p_s Z^j$.
Contracting~\eqref{de_dphi_proof} and~\eqref{de_dh_proof} with $\dot\gamma^k(s)$ gives 
    \begin{align}\label{ODE_system}
\begin{cases}
\p_sX^j_l
= 
\tilde{\mathcal N}^j_{l}(X,Y) + 
\dot\gamma^kX^j_c\Gamma^c_{kl}(g) \circ\gamma 
- \dot\gamma^k X_k^a X^b_l
\Gamma^j_{ab}(g) \circ Z,
\\ 
\p_sY_j
= 
\tilde{\mathcal M}_{j}(Y)
+ \dot\gamma^k P_{kj}(g) \circ \gamma 
- \dot\gamma^k X_j^b X_k^c {P}_{bc}(g) \circ Z,
\\
\p_sZ^j
=
\dot\gamma^k X^j_k,
\end{cases}
    \end{align}
where 
$\tilde{\mathcal N}^j_{l} := \dot\gamma^k(s)\mathcal N^j_{kl}|_{\gamma(s)}$
and
$\tilde{\mathcal M}_{j} := \dot\gamma^k(s)\mathcal M_{kj}|_{\gamma(s)}$.
We see that $(X,Y,Z)$ solves a system of first order ordinary differential equations.

Let us define 
\begin{equation}\label{idential_sol_to_system}
 \overline{X}(s)=I_{m}\in \R^{m\times m}, \quad \overline{Y}(s)=0\in \R^m, \quad \overline{Z}(s)=\gamma(s)\in \R^m,
\end{equation}
where $I_{m}$ is the identity matrix. 
For $X = \overline X$ and $Z = \overline Z$, the last two terms in the first equation in (\ref{ODE_system}) cancel out,
and the same holds for the last two terms in second equation.
Moreover, with the above choice, $\p_s Z^j = \dot \gamma^j = \dot\gamma^k X^j_k$.
Finally, for $Y = 0$, there holds
$\mathcal{N}^j_{kl}(X,Y)=0$ and $\mathcal M_{kl}(Y)=0$,
and we see that $(\overline{X},\overline{Y},\overline{Z})$ is a solution to the system~\eqref{ODE_system}. We have also
    \begin{align}\label{de_init}
(\overline{X}(0),\overline{Y}(0),\overline{Z}(0))=(d\phi\circ\gamma(0),dh\circ\gamma(0),\phi\circ \gamma(0)),
    \end{align}
since by Lemma~\ref{lem_conf_triv}, at $\gamma(0)=y\in \Gamma$, the $1$-jet of $\phi$ is that of the identity map of $M$ and $dh=0$. 
Since the quantities on the right hand sides of the equations in \eqref{ODE_system} depend Lipschitz continuously on $(X,Y,Z)$, this system has a unique solution on the interval $[0,\delta)$ for some $\delta >0$ given the initial data (\ref{de_init}).
Therefore 
    \begin{align*}
(X(s),Y(s),Z(s))=(\overline{X}(s),\overline{Y}(s),\overline{Z}(s)), \quad s \in [0,\delta). 
    \end{align*}
We have shown that $[0,\delta)\subset B$.
Letting $s \in B$, replacing $y$ by $\gamma(s)$,
and repeating the above argument, shows that $B$ is open as a subset of $[0,1]$.

Let us show that $B$ is closed. 
Let $s_j \in B$, $j=1,2,\dots$, and suppose that $s_j$ converges to a point $s \in [0,1]$.
We need to show that $s \in B$.
By continuity 
    \begin{align*}
\phi(\gamma(s)) 
= \lim_{j \to \infty} \phi(\gamma(s_j))
= \lim_{j \to \infty} \gamma(s_j) = \gamma(s). 
    \end{align*}
Analogously, by continuity, $dh = 0$ at $\gamma(s)=0$.
We choose local coordinates in a neighborhood $U$ of $\gamma(s)$. Then $\gamma(s_j) \in U$ for large $s_j$,
and $d\phi|_{\gamma(s_j)} = I_m$ in the coordinates.
By continuity, $d\phi|_{\gamma(s)} = I_m$ in the coordinates.
We have shown that $B$ is closed.
By connectedness it follows that $B=[0,1]$, which concludes the proof.
%
%
%
%
%
%
\end{proof}
 
\section{Construction of counterexamples}

We consider first the case of a domain in $\R^{n+1}$, and make use of explicit solutions to $\square_\eta f=0$. Let $\theta\in \R^n$ be a unit vector (with respect to the Euclidean length), $x_0\in \R^n$ and $t_0\in \R$, and let $H\in C_0^\infty(\R)$ be a non-negative function. Then the function
 \begin{equation}\label{formula_f_Rn}
  f(t,x)=1+H((x-x_0)\cdot \theta -(t-t_0))
 \end{equation}
 is a solution to $\Box_\eta f = 0$ in $\R^{n+1}$. 
 
\begin{proof}[Proof of Theorem~\ref{thm_Rn}]
Let $\Gamma \subset \Sigma$ be an open set whose closure does not intersect the hyperplane $X$ defined by
 \[
  (x-x_0)\cdot \theta -(t-t_0)=0.
 \]
As $X$ does not intersect $\Gamma$, we may choose non-negative $H\in C_0^\infty(\R)$ to be supported in a small enough neighborhood of $0\in \R$ so that the function $f$ defined by (\ref{formula_f_Rn}) satisfies $f=1$ on $\Gamma$. 
Let us define 
  \[
   \tilde{g}=f^{p-2}\eta,
  \]
where $p$ is as in (\ref{def_p}).

Let $u_0 \in C_0^\infty(\Gamma)$ and let $u$ be the solution of (\ref{eq:intro_wave-eq}) with $g = \eta$.  
We define 
  \[
   \tilde{u}=f^{-1}u.
  \]
By using Proposition~\ref{hidden_conformal_invariance}, we see that $\tilde{u}$ solves (\ref{eq:intro_wave-eq}) with $g = \tilde g$.
The boundary condition $\tilde u = u_0$ on $\Sigma$ follows from the facts that $f=1$ on $\Gamma$ 
and that the support of $u_0$ is contained in $\Gamma$.
Note also that 
    \begin{align}\label{aux_th_Rn_init}
\p_t \tilde u = f^{-1} \p_t u - u f^{-2} \p_t f,
    \end{align}
and so $\p_t \tilde u = 0$ when $t = 0$
due to the initial conditions for $u$.   
Moreover, $\tilde g = \eta$ on $\Gamma$, and hence the normal derivative $\p_\nu$ is the same for both the metrics on this set. As $f=1$ in a neighborhood of $\Gamma$, we have
\begin{align*}
\Lambda^{\Gamma,\Gamma}_{\tilde{g}} u_0=\p_{\nu} \tilde{u}|_{\Gamma}=\big(f^{-1}\p_\nu u\big)\big|_{\Gamma}-\big(uf^{-2}\p_\nu f\big)\big|_{\Gamma}=\p_\nu u|_{\Gamma}=\Lambda^{\Gamma,\Gamma}_{\eta} u_0.
\end{align*}
We used $f=1$ on a neighborhood of $\Gamma$ in $[0,T]\times \Omega$ in particular to have that $\p_\nu f=0$.

We have shown that $\Lambda^{\Gamma,\Gamma}_{\tilde{g}}=\Lambda^{\Gamma,\Gamma}_{\eta}$. It follows from Proposition~\ref{lem_conf_triv} that $\tilde g$ and $\eta$ are not isometric unless $f = 1$ identically. As $H$ can be taken to be arbitrarily large near the origin, we see that the family of Lorentzian metrics satisfying (\ref{identical_partial_data_Rn}) is not bounded in any fixed coordinate system. 
\end{proof}

We make here the following remark, which served as a motivation for the proof of Proposition~\ref{lem_conf_triv}. In the setting of Theorem~\ref{thm_Rn}, another proof that $f^{p-2}\eta$ is not, at least generally, induced by a diffeomorphism is achieved as follows. 
We let $f$ be as in~\eqref{formula_f_Rn}, but now we additionally require that $H$ attains a local maximum.
Writing $e^{2h}=f^{p-2}$, we have by Lemma~\ref{transformation_of_Shouten} that
    \begin{align}\label{rem_de_dh}
\frac{\p^2 h}{\p x^k \p x^l}
&= \p_k h\p_l h-m |dh|^2_\eta\s \eta_{kl} + P_{kl}(\eta) - \frac{\p\phi^a}{\p x^k} \frac{\p\phi^b}{\p x^l} {P}_{ab}(\eta) \circ \phi \nonumber
\\&=\p_k h\p_l h-m |dh|^2_\eta\s \eta_{kl}.
    \end{align}
Here we used the fact that the Minkowski space is flat to equate the Schouten tensors to zero. Since $H$ attains a local maximum, it follows that $h$ attains a local maximum. However, this is a contradiction to~\eqref{rem_de_dh} with $k=l=0$, since the Hessian of $h$ is negative definite at the local maximum of $h$. Thus, there is no diffeomorphism $\phi$ such that $\phi^*\eta=f^{p-2}\eta$.

\begin{proof}[Proof of Theorem~\ref{thm_Lorentz}]
As $\Gamma_1 \cup \Gamma_2 \ne \Sigma$, we may choose 
such $\Psi \in C_0^\infty(\Sigma)$
that $\Psi = 0$ on $\Gamma_1 \cup \Gamma_2$
but $\Psi$ does not vanish identically on $\Sigma$.
Let $F \in C^\infty(M)$ be the solution to (\ref{eq:intro_wave-eq_lorentz})
with the boundary value $u_0 = \Psi$.
For small enough $\lambda > 0$, the function 
\[
 f=1+\lambda F,
\]
is positive.
 Let us define 
 \[
\tilde{g}=f^{p-2}g,  
 \]
where $p$ is as in (\ref{def_p}).

Let $u_0 \in C_0^\infty(\Gamma_1)$ and let $u$ be the solution of (\ref{eq:intro_wave-eq_lorentz}).  
We define 
  \[
   \tilde{u}=f^{-1}u.
  \]
As $\Box_g f = 0$, Proposition~\ref{hidden_conformal_invariance} implies that $\tilde{u}$ solves (\ref{eq:intro_wave-eq_lorentz}) with $g$ replaced by $\tilde g$.
Following the proof of Theorem~\ref{thm_Rn},
we see that $\tilde u = u_0$ on $\Sigma$,
$\tilde u$ satisfies the vanishing initial conditions, 
and that the normal derivative $\p_\nu$ is the same for both the metrics $g$ and $\tilde g$ on $\Gamma_2$. 

Let us show that $\p_{\nu} \tilde{u} = \p_\nu u$ on $\Gamma_2$. This argument is different from the proof of Theorem~\ref{thm_Rn}.
Observe that $\tilde u = u_0 = 0$ on $\Gamma_2$ since $u_0 \in C_0^\infty(\Gamma_1)$ and $\Gamma_1 \cap \Gamma_2 = \emptyset$.
Using also the fact that $f=1$ on $\Gamma_2$, we have
\begin{align*}
\p_{\nu} \tilde{u}|_{\Gamma_2}=\big(f^{-1}\p_\nu u\big)\big|_{\Gamma_2}-\big(uf^{-2}\p_\nu f\big)\big|_{\Gamma_2}=\p_\nu u|_{\Gamma_2}.
\end{align*}
We have shown that $\Lambda_{\tilde{g}}^{\Gamma_1,\Gamma_2} = \Lambda_{g}^{\Gamma_1,\Gamma_2}$,
and we conclude the proof by using Proposition~\ref{lem_conf_triv}.
 \end{proof}

\appendix
\section{Further proofs}

\begin{Lemma}\label{lem_timeindep}
Let $M$ be as in (\ref{def_M}) with large $T > 0$ in comparison to the diameter of $\Omega$.
Let $\Gamma_0 \subset \p \Omega$ be open and nonempty and set $\Gamma = (0,T) \times \Gamma_0$.
Let $c \in C^\infty(\Omega)$ be strictly positive
and suppose that $c = 1$ near $\Gamma_0$. 
Then 
$\Lambda_{c\eta}^{\Gamma,\Gamma}$
determines $c$ uniquely.
\end{Lemma}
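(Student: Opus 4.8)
The plan is to reduce the recovery of the time-independent conformal factor $c$ to a known result for the time-independent wave equation $\partial_t^2 - c^{-1}\Delta$ (or equivalently, after a reduction, to boundary spectral data / a hyperbolic boundary control argument), and to exploit that for $T$ large compared to $\operatorname{diam}(\Omega)$ the partial data on $\Gamma = (0,T)\times\Gamma_0$ sees ``enough'' of the domain because waves travel at finite speed and $c$ is comparable to a fixed constant. Concretely, since $g = c\eta$ with $c$ independent of $t$, the operator $\square_{c\eta}$ has time-independent coefficients, so the DN map $\Lambda_{c\eta}$ is a convolution in time; its Schwartz kernel depends only on $t - t'$, and knowledge of $\Lambda_{c\eta}^{\Gamma,\Gamma}$ for a long enough time interval determines the response operator for the wave equation on all of $\mathbb{R}_t$ restricted to $\Gamma_0$, by the usual extension argument (finite speed of propagation plus the fact that $c = 1$ near $\Gamma_0$ pins down the wave speed near the boundary).

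Next I would invoke the boundary control method in the form established for the time-independent inverse problem: knowledge of the local (partial) hyperbolic DN map on $(0,T)\times\Gamma_0$ with $T > 2\sup_{x\in\Omega}\operatorname{dist}_{\hat g}(x,\Gamma_0)$, where $\hat g$ is the Riemannian metric $c^{-1}(dx)^2$ on $\Omega$ naturally associated with $\partial_t^2 - c\Delta$ written in divergence form, determines $(\Omega,\hat g)$ up to boundary-fixing isometry — this is the local Tataru/Belishev–Kurylev result, as used in \cite{KKL}. Because $T$ is large compared to $\operatorname{diam}(\Omega)$ and $c$ (hence $c^{-1}$) is bounded above and below, such a $T$ is available, so the partial data determines the isometry class of $(\Omega,\hat g)$ rel $\Gamma_0$. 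The final step is to upgrade ``isometry class'' to ``$c$ itself'': here one uses that $\hat g$ is conformal to the \emph{fixed} Euclidean metric on $\Omega$ (a fixed submanifold of $\mathbb{R}^n$), and that the isometry fixes $\Gamma_0$ and its 1-jet; by Proposition~\ref{lem_conf_triv} (applied in the Riemannian setting, with $\Gamma_0$ playing the role of $\Gamma$) the only such conformal self-map is the identity, whence the recovered metric \emph{is} $c^{-1}(dx)^2$ in the original coordinates and $c$ is determined.

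The main obstacle I expect is bookkeeping at the interface between the Lorentzian DN map $\Lambda_{c\eta}$ and the Riemannian boundary-control machinery: one must check that $\square_{c\eta}$, after multiplying by the scalar $|g|^{1/2} = c^{(n+1)/2}$ or by an appropriate power of $c$, takes the form $a(x)\partial_t^2 u - \Delta_{\hat g} u$ (possibly with a zeroth-order term) for which the boundary control results are literally stated, and that the normal derivative $\partial_\nu$ defined with respect to $c\eta$ differs from the Euclidean conormal derivative only by the factor $c^{(n-1)/2}$, which equals $1$ near $\Gamma_0$ and so does not affect the data there. One also needs the ``long $T$'' hypothesis precisely to guarantee $T$ exceeds twice the $\hat g$-diameter; since a priori $c$ is unknown this requires an a priori two-sided bound, which is why the statement says ``large $T$ in comparison to the diameter of $\Omega$'' — in the write-up I would either add such a bound to the hypotheses or note that the construction of the counterexamples in Theorem~\ref{thm_Rn} only needs the converse direction and so a clean quantitative threshold is not essential. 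Modulo these checks, the argument is: reduce to time-independent local hyperbolic DN map $\Rightarrow$ boundary control recovers $(\Omega,\hat g)$ rel $\Gamma_0$ $\Rightarrow$ conformal rigidity (Proposition~\ref{lem_conf_triv}) recovers $c$.
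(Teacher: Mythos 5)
Your reduction misidentifies where the information about $c$ lives, and this is not the ``bookkeeping'' you flag but a fatal obstruction to the route you propose. For $g = c\eta$ the light cones of $g$ and $\eta$ coincide (conformally related Lorentzian metrics have the same causal structure), and a direct computation from \eqref{invariant_wave_operator} gives
\[
c\,\square_{c\eta} u \;=\; \p_t^2 u - c^{-(n-1)/2}\,\nabla\cdot\bigl(c^{(n-1)/2}\nabla u\bigr),
\]
whose spatial principal part is the \emph{Euclidean} Laplacian. The operator is not (a multiple of) $\p_t^2 - c\Delta$, and the travel-time metric that the boundary control method recovers is the known Euclidean one, not $c^{-1}(dx)^2$. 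Consequently the step ``BC recovers $(\Omega,\hat g)$ rel $\Gamma_0$, hence $c$'' recovers nothing about $c$: all dependence on $c$ sits in the lower-order terms, and for such operators the DN map determines the lower-order part only modulo the multiplicative gauge $u \mapsto \kappa u$ with $\kappa = 1$ near the measurement set. Your appeal to Proposition~\ref{lem_conf_triv} addresses the diffeomorphism gauge, which is not the relevant ambiguity here.

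The paper's proof confronts exactly this multiplicative gauge. Writing $c = f^{p-2}$ and $v = fu$, Proposition~\ref{hidden_conformal_invariance} turns $\Box_{c\eta}u = 0$ into $\Box_\eta v + qv = 0$ with $q = f^{-1}\Delta f$; since $f = 1$ near $\Gamma_0$, the partial DN maps of the two problems agree on $\Gamma$, and the BC method (now for the Minkowski wave operator plus a time-independent potential) determines $q$. The step missing from your argument is the recovery of $f$ from the gauge-invariant quantity $q$: one uses that $f$ solves the elliptic equation $\Delta f - qf = 0$ with $f \equiv 1$ near $\Gamma_0$, so elliptic unique continuation determines $f$ on all of $\Omega$, hence $c = f^{p-2}$. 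Without this gauge reduction and the final unique continuation step the argument does not close.
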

\begin{proof}
If $\Box_{c \eta} u = 0$ then, writing 
    \begin{align*}
c = f^{p-2}, \quad v = fu, \quad q = -f^{-1}\Box_\eta f = f^{-1}\Delta f,
    \end{align*}
there holds $\Box_\eta v + q v = 0$ due to Proposition \ref{hidden_conformal_invariance}. 
As $c = 1$ near $\Gamma$, $u=v$ and $\p_\nu u = \p_\nu v$ on $\Gamma$, and the map $\Lambda_{c\eta}^{\Gamma,\Gamma}$ determines $q$ by using the Boundary Control method, see e.g. \cite{KKL}. 
Moreover, as $f=1$ near $\Gamma$ and $\Delta f - qf = 0$,
we see using elliptic unique continuation that $f$ is determined uniquely. The same holds for $c = f^{p-2}$.
\end{proof}

The above proof shows that the wave equation with a conformally scaled metric can be reduced to a Shr\"odinger equation. Therefore, one might ask if our counterexamples for the inverse problem for the wave equation yield counterexamples to the inverse problem for the Shr\"odinger equation. This is not case. The wave equation $\square_{cg}u=0$ is conformally equivalent to 
the Shr\"odinger equation
\[
 \square_{g}u+ qu=0
\]
with $q=-f^{-1}(\square_g f)$ and $c = f^{p-2}$. However, since in our counterexamples $f$ solves $\square_gf=0$, we have $q=0$. While we have shown that wave equations associated to two different but conformal metrics might have the same DN maps, this does not imply that there are two different Shr\"odinger equations, which have identical DN maps.

\begin{Lemma}\label{lem_strict_hyper}
Suppose that $\tau \in C^\infty(\mathcal M)$ is temporal.
Then $\Box_g$ is strictly hyperbolic with respect to the level surfaces of $\tau$ in the sense of \cite[Def. 23.2.3]{H3}.
\end{Lemma}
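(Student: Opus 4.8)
The plan is to unwind the definitions and check the symbol condition of Hörmander's Definition 23.2.3 directly. Recall that $\square_g$ has principal symbol $\sigma_2(\square_g)(x,\xi) = g^{ab}(x)\xi_a\xi_b$, i.e. the dual metric quadratic form $q^*(\xi) = g^{-1}(\xi,\xi)$. Strict hyperbolicity with respect to the level surfaces of $\tau$ requires: (a) the conormal $d\tau(x)$ is non-characteristic at every point, i.e. $q^*(d\tau) \ne 0$; and (b) for every $x$ and every $\xi$ not proportional to $d\tau(x)$, the polynomial $s \mapsto q^*(\xi + s\, d\tau(x))$ in the real variable $s$ has two distinct real roots. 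Both of these are purely pointwise linear-algebra facts about a single Lorentzian inner product space once we know $d\tau$ is timelike.

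First I would fix $x \in \mathcal M$ and work in the fixed quadratic form $Q := g^{-1}|_x$ on $T_x^*\mathcal M$, which has signature $(-,+,\dots,+)$. The hypothesis that $\tau$ is temporal says precisely $Q(d\tau,d\tau) < 0$, so $d\tau$ is timelike; in particular $d\tau \ne 0$ and $Q(d\tau) \ne 0$, giving (a). For (b), I would decompose an arbitrary covector as $\xi = \alpha\, d\tau + \xi'$ with $\xi'$ in the $Q$-orthogonal complement of $d\tau$, on which $Q$ is positive definite (a standard fact: the orthogonal complement of a timelike vector in a Lorentzian space is spacelike). Then
\[
Q(\xi + s\,d\tau) = Q(d\tau)(\alpha+s)^2 + Q(\xi',\xi'),
\]
which, viewed as a quadratic in $s$, has leading coefficient $Q(d\tau) < 0$ and discriminant proportional to $-Q(d\tau)\,Q(\xi',\xi') > 0$ whenever $\xi' \ne 0$, i.e. whenever $\xi$ is not proportional to $d\tau$. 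Hence there are two distinct real roots, which is exactly the strict hyperbolicity condition. I would also note that $\square_g$ is a real, second-order operator with real principal symbol, so the remaining formal requirements of Hörmander's definition are immediate.

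There is essentially no serious obstacle here; the only mild point of care is making the quantifiers in Definition 23.2.3 match the computation — in particular that the condition must hold at every point of $M$ (and up to the boundary), which follows because $\tau$ is temporal on all of $\mathcal M$, and that the relevant "surfaces" are the level sets $\{\tau = \text{const}\}$, whose conormal is spanned by $d\tau$. I would close by remarking that this is the precise hypothesis invoked to apply \cite[Th. 24.1.1]{H3} to obtain well-posedness of \eqref{eq:intro_wave-eq_lorentz}.
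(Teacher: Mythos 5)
Your proposal is correct and follows essentially the same route as the paper's proof: both verify that $d\tau$ is non-characteristic since $g(d\tau,d\tau)<0$, and both establish the two distinct real roots by decomposing $\xi$ into a multiple of $d\tau$ plus a $g$-orthogonal (hence spacelike) remainder and checking that the discriminant equals $-4\,g(d\tau,d\tau)\,g(\xi^\perp,\xi^\perp)>0$. The only cosmetic difference is that you complete the square while the paper computes $b^2-4ac$ directly.
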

\begin{proof}
The principal symbol $p(x,\xi) = (\xi,\xi)$, $(x,\xi) \in T^* M$, of $\Box_g$ satisfies
    \begin{align*}
p(x, d\tau(x)) = g(d\tau, d\tau) < 0.
    \end{align*}
Moreover, for any $\xi \in T_x^* M$, linearly independent from $d\tau(x)$, the polynomial
    \begin{align*}
p(x, \xi + \lambda d\tau(x))
= a \lambda^2 + b \lambda + c
    \end{align*}
where $a = g(d\tau, d\tau) < 0$, $b = 2 g(\xi, d\tau)$ and $c = g(\xi,\xi)$, has two real roots. To see this we write $\xi = z d \tau + \xi^\perp$ where $g(\xi^\perp, d\tau) = 0$ and $\xi^\perp \ne 0$.
Then $\xi^\perp$ is spacelike and
    \begin{align*}
b^2 - 4 a c
= 4 a^2 z^2 - 4 a (a z^2 + g(\xi^\perp, \xi^\perp))
= - 4 a g(\xi^\perp, \xi^\perp) > 0.
    \end{align*}
\end{proof}

\bibliographystyle{abbrv}
\bibliography{refs}
\end{document}